\def\titlerunning#1{\gdef\titrun{#1}}
\def\author#1{\gdef\autrun{\def\and{\unskip, }#1}\gdef\@author{#1}}
\def\address#1{{\def\and{\\\hspace*{18pt}}\renewcommand{\thefootnote}{}%
\footnote {#1}}%
\markboth{\autrun}{\titrun}}
\def\email#1{\hspace*{4pt}{\em e-mail}: #1}
\def\MSC#1{{\renewcommand{\thefootnote}{}%
\footnote{\emph{Mathematics Subject Classification (2010):} #1}}}
\def\keywords#1{\par\medskip
\noindent\textbf{Keywords:} #1}
\newtheorem{theorem}{Theorem}[section]
\newtheorem{prop}[theorem]{Proposition}
\newtheorem{lemma}[theorem]{Lemma}
\theoremstyle{definition}
\newtheorem{remark}[theorem]{Remark}
\numberwithin{equation}{section}
\def\cL{\mathcal L}
\def\cA{\mathcal A}
\def\cC{\mathcal C}
\def\cF{\mathcal F}
\def\cH{\mathcal H}
\def\cK{\mathcal K}
\def\cP{\mathcal P}
\def\cT{\mathcal T}
\def\cR{\mathcal R}
\def\cS{\mathcal S}
\def\cW{\mathcal W}
\def\cQ{\mathcal Q}
\def\PG{{\rm PG}}
\def\GF{{\rm GF}}
\def\PGL{{\rm PGL}}
\def\GL{{\rm GL}}
\begin{document}


\baselineskip=16pt

\titlerunning{}

\title{Optimal subspace codes in $\PG(4,q)$}

\author{Antonio Cossidente
\and
Francesco Pavese 
\and 
Leo Storme}

\date{}

\maketitle

\address{A. Cossidente: Dipartimento di Matematica, Informatica ed Economia, Universit\`a degli Studi della  Basilicata, Contrada Macchia Romana, 85100 Potenza, Italy;  \email{antonio.cossidente@unibas.it}
\and 
F. Pavese: Dipartimento di Meccanica, Matematica  e Management, Politecnico di Bari, Via Orabona 4, 70125 Bari, Italy; \email{francesco.pavese@poliba.it}
\and
L. Storme: Department of Mathematics, Ghent University, Krijgslaan 281, 9000 Ghent, Belgium; \email{leo.storme@ugent.be}
}

\bigskip

\MSC{Primary 05B25; Secondary 51E20, 94B25}


\begin{abstract}
We investigate subspace codes whose codewords are subspaces of $\PG(4,q)$ having non--constant dimension. In particular, examples of optimal mixed--dimension subspace codes are provided, showing that $\cA_q(5,3) = 2(q^3+1)$.

\keywords{Galois geometry, subspace codes}
\end{abstract}

\section{Introduction}

Let $V$ be an $n$--dimensional vector space over $\GF(q)$, $q$ any prime power. The set $S(V)$ of all subspaces of $V$, or subspaces of the projective space $\PG(V)$, forms a metric space with respect to the {\em subspace distance} defined by $d_s(U,U')=\dim (U+U')- \dim(U\cap U')$. 
In the context of subspace codes, the main problem is to determine the largest possible size of codes in the space $(S(V),d_s)$ with a given minimum distance, and to classify the corresponding optimal codes. The interest in these codes is a consequence of
the fact that codes in the projective space and codes in the Grassmannian over a finite field referred to as subspace codes and constant--dimension codes (CDC), respectively, have been proposed for error control in random linear network coding.
An $(n,M,d)_q$ mixed--dimension subspace code is a set $\cC$ of subspaces of $V$ with $|\cC| = M$ and minimum subspace distance $d_s(\cC)=\min\{d_s(U,U') \;\; | \;\; U,U'\in \cC, U \ne U' \}=d$. The maximum size of an $(n,M,d)_q$ mixed--dimension subspace code is denoted by $\cA_q(n,d)$. In this paper we discuss the smallest open mixed--dimension case, which occurs when $n=5$ and $d=3$. In particular, examples of optimal mixed--dimension subspace codes are provided, showing that $\cA_q(5,3) = 2(q^3+1)$.

We wish to remark that, independently, also Honold, Kiermaier and Kurz proved that $\cA_q(5,3) = 2(q^3+1)$. We refer to their article \cite{TH:17} for their construction method, and for many other results on subspace codes.

\subsection{Preliminaries}

A {\em partial line spread} of $\PG(4,q)$ is a set of pairwise disjoint lines of $\PG(4,q)$. From $\cite{B}$, the largest partial line spread of $\PG(4,q)$ has size $q^3+1$. Let $\cC$ be an optimal $(5,3)_q$ subspace code. Since the lines contained in $\cC$ are pairwise disjoint, it follows that $\cC$ contains at most $q^3+1$ lines. A dual argument shows that $\cC$ contains at most $q^3+1$ planes. Hence, if $\cC$ consists of lines and planes, we have that $| \cC | \le 2(q^3+1)$  and, in order to construct such a code one needs to find a set $\cL$ of $q^3+1$ pairwise skew lines and a set $\cP$ of $q^3+1$ planes mutually intersecting in exactly a point such that no line of $\cL$ is contained in a plane of $\cP$.   

Notice that $\cC$ contains at most one point and, dually, $\cC$ contains at most one solid. The next result gives an upper bound on the size of $\cC$. 

\begin{lemma}
If $\cC$ contains a point, then $\cC$ contains at most $q^3$ planes. Dually, if $\cC$ contains a solid, then $\cC$ contains at most $q^3$ lines. 
\end{lemma}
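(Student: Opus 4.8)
The plan is to first convert the minimum-distance hypothesis $d_s(\cC)=3$ into pure incidence conditions, and then to bound the number of codewords of each type by a counting argument. For a point $P$ and a plane $\pi$ one computes $d_s(P,\pi)=2$ if $P\in\pi$ and $d_s(P,\pi)=4$ otherwise; hence, once $P\in\cC$, every plane of $\cC$ must avoid $P$. Likewise, for two planes $\pi_1,\pi_2$ one has $d_s(\pi_1,\pi_2)=2$ when they share a line and $d_s(\pi_1,\pi_2)=4$ when they share only a point, so any two planes of $\cC$ meet in exactly a point. Dually, if $S\in\cC$ is a solid, then every line of $\cC$ meets $S$ in exactly a point (a line lying in $S$ would be at distance $2$ from $S$), while distinct lines of $\cC$ remain pairwise skew.

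I would carry out the counting in the dual (line) formulation, where it is most transparent, and then transport it by the duality of $\PG(4,q)$. Let $S$ be the solid of $\cC$ and let $\cL$ be the set of lines of $\cC$. Each $\ell\in\cL$ meets $S$ in a single point, so the remaining $q$ points of $\ell$ lie in the complement $\PG(4,q)\setminus S$. Since the lines of $\cL$ are pairwise disjoint, these point sets are pairwise disjoint, and therefore the lines of $\cL$ account for exactly $q\,|\cL|$ points off $S$. As $\PG(4,q)$ has $q^4+q^3+q^2+q+1$ points and $S$ has $q^3+q^2+q+1$ of them, there are exactly $q^4$ points off $S$; hence $q\,|\cL|\le q^4$, that is $|\cL|\le q^3$.

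The statement for a point $P$ and the planes of $\cC$ now follows from the duality of $\PG(4,q)$, under which points and solids are interchanged, planes and lines are interchanged, and the subspace distance is preserved. If one prefers a direct argument, the same bound drops out of the incidence count dual to the above: each plane $\pi$ of $\cC$ lies in $q+1$ solids, exactly one of which passes through $P$ (namely $\langle P,\pi\rangle$), so $\pi$ lies in exactly $q$ solids avoiding $P$; two distinct planes of $\cC$ can never lie in a common solid, since inside a solid they would meet in a line rather than in a point; and there are exactly $q^4$ solids of $\PG(4,q)$ avoiding $P$. Double counting the incident pairs (plane of $\cC$, solid avoiding $P$) gives $q\cdot|\cP|\le q^4$, whence $|\cP|\le q^3$.

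The only real obstacle is to select a count that is tight enough. Recording merely the trace of each line on $S$ (equivalently, the single solid through $P$ containing each plane) shows only that these traces are distinct points of $S$, giving the far weaker bound $q^3+q^2+q+1$. The gain to the sharp value $q^3$ comes precisely from using the full line of $q+1$ points—charging the $q$ points that fall outside $S$—rather than just its intersection with $S$; dually, from charging the $q$ solids through each plane that avoid $P$.
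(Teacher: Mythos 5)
Your proof is correct, and its structure differs from the paper's. The paper argues by contraposition: assuming $\cC$ has more than $q^3$ lines, it selects a set $L$ of $q^3+1$ of them (pairwise disjoint by the distance condition), observes that these cover all but $q^2$ points of $\PG(4,q)$, and deduces, by counting the uncovered points inside an arbitrary solid $S$, that every solid of $\PG(4,q)$ contains at least one line of $L$ — hence no solid can lie in $\cC$. You argue directly: with a solid $S \in \cC$ in hand, each line of $\cC$ meets $S$ in exactly one point and so contributes $q$ points to the $q^4$ points off $S$; pairwise disjointness of the lines gives $q\,|\cL| \le q^4$ at once. The two double counts are complementary (you charge the covered points outside $S$, the paper charges the uncovered points inside $S$), but yours is more economical: it needs no contrapositive and no preliminary extraction of a set of exactly $q^3+1$ lines, and the final inequality is a one-line packing bound. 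What the paper's version buys in exchange is a geometric fact of independent interest: any set of $q^3+1$ pairwise disjoint lines of $\PG(4,q)$, i.e.\ a maximum partial line spread, meets every solid in at least one full line. Your explicit dual count for the point/plane half (each plane of $\cC$ lies in exactly $q$ solids avoiding $P$, and no two planes of $\cC$ share a solid, so $q\,|\cP| \le q^4$) is also sound; the paper disposes of that half purely by duality, as you note one may.
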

\begin{proof}
We only need to prove the second assertion. Assume that $\cC$ contains more than $q^3$ lines. From the discussion above, $\cC$ contains a set $L$ consisting of $q^3+1$ lines. Let $L'$ be the set of points of $\PG(4,q)$ covered by the lines of $L$. Then every solid is covered by at least a member of $L$. Indeed, if $S$ is a solid of $\PG(4,q)$ and $s$ is the number of lines of $L$ contained in $S$, we have that $|S \setminus L'| = q^3+q^2+q+1 - s(q+1)-(q^3+1-s) \le q^2 = |\PG(4,q) \setminus L'|$. Hence, $s \ge 1$. It follows that $\cC$ contains no solids, as required.
\end{proof}

From the previous result, it follows that $\cA_q(5,3) \le 2(q^3+1)$ and there are four possibilities for the code $\cC$:
\begin{itemize}
\item[I)] $\cC$ consists of one point, $q^3+1$ lines and $q^3$ planes;
\item[II)] $\cC$ consists of $q^3$ lines, $q^3+1$ planes and one solid;
\item[III)] $\cC$ consists of one point, $q^3$ lines, $q^3$ planes and one solid;
\item[IV)] $\cC$ consists of $q^3+1$ lines and $q^3+1$ planes.
\end{itemize} 
In the remaining part of the paper, we exhibit examples of $(5,2(q^3+1),3)_q$ codes showing that $\cA_q(5,3) = 2(q^3+1)$. These results are achieved by using a suitable subgroup $G$ of $\PGL(5,q)$ of order $q^3$, if $q$ is odd, or of order $q^3-q$, if $q$ is even. We shall find it helpful to work with the elements of $G$ as matrices in $\GL(5,q)$. We shall consider the points of $\PG(4,q)$ as column vectors, with matrices in $G$ acting on the left.

\section{The odd characteristic case}

Let $q = p^h$, where $p$ is an odd prime. Let $\PG(4,q)$ be the four dimensional projective space over $\GF(q)$ equipped with homogeneous coordinates $(X_1, X_2, X_3, X_4, X_5)$, let $\pi$ be the projective plane with equations $X_4 = X_5 = 0$ and let $\ell$ be the line of $\pi$ with equations $X_3 = X_4 = X_5 = 0$. Let $\omega$ be a primitive element of $\GF(q)$ and denote by $\Pi_i$ the solid of $\PG(4,q)$ passing through $\pi$ with equation $X_4 = \omega^{i-1} X_5$, if $1 \le i \le q-1$, or $X_4 = 0$, if $i = q$, or $X_5 = 0$, if $i = q+1$. 

\begin{lemma}
Let $a,b,c$ be fixed elements of $\GF(q)$ such that the polynomial $X^3+aX^2+bX+c = 0$ is irreducible over $\GF(q)$ and 
$$
M_{r,s,t} = \left(
\begin{array}{ccccc}
1 & 0 & r & r^2-ar+s & t \\
0 & 1 & s & 2rs-t & s^2+bs-cr \\
0 & 0 & 1 & 2r & 2s \\
0 & 0 & 0 & 1 & 0 \\
0 & 0 & 0 & 0 & 1
\end{array}
\right) ,
$$
then $G = \{ M_{r,s,t} \;\; | \;\;  r,s,t \in \GF(q) \}$ is a $p$--group of order $q^3$.
\end{lemma}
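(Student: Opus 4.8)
The plan is to show directly that the set $G$ is closed under matrix multiplication, to read off the resulting group law, and then to deduce the order and the $p$-group property. First I would observe that the assignment $(r,s,t)\mapsto M_{r,s,t}$ is injective: the entries in positions $(1,3)$, $(2,3)$ and $(1,5)$ of $M_{r,s,t}$ are exactly $r$, $s$ and $t$, so distinct triples yield distinct matrices. Hence $G$ has exactly $q^3$ elements, and it remains only to check that it is a subgroup of $\GL(5,q)$.

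The heart of the argument is a single matrix computation: forming the product $M_{r,s,t}\,M_{r',s',t'}$ entry by entry. I expect this to be the main (though entirely routine) obstacle, since the fourth and fifth columns carry the quadratic expressions in $r,s$ involving $a,b,c$, and one must check that these recombine correctly. Carrying out the multiplication, I anticipate finding
$$
M_{r,s,t}\,M_{r',s',t'} = M_{r+r',\,s+s',\,t+t'+2rs'},
$$
so that in particular the coefficients $a,b,c$ cancel out and do not enter the group law at all (the irreducibility hypothesis on $X^3+aX^2+bX+c$ is not needed for this lemma and will only be used later). The key consistency checks are that the $(1,4)$- and $(2,5)$-entries of the product reproduce $r''^2-ar''+s''$ and $s''^2+bs''-cr''$ with $r''=r+r'$ and $s''=s+s'$, and that the $(1,5)$- and $(2,4)$-entries agree on the single value $t''=t+t'+2rs'$.

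Granting this identity, closure is immediate, and $M_{0,0,0}$ is the identity matrix; thus $G$ is a finite subset of $\GL(5,q)$ that is closed under multiplication and contains the identity, and being finite it is therefore a subgroup (explicitly, $M_{r,s,t}^{-1}=M_{-r,-s,-t+2rs}$). Consequently $|G|=q^3=p^{3h}$. Finally, since the order of $G$ is a power of $p$, every element has $p$-power order and $G$ is a $p$-group; alternatively, each $M_{r,s,t}$ is upper unitriangular and hence unipotent, which yields the same conclusion directly.
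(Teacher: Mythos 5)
Your proposal is correct and takes essentially the same approach as the paper: both rest on the same entry-by-entry product computation, which yields the group law $M_{r,s,t}M_{r',s',t'}=M_{r+r',\,s+s',\,t+t'+2rs'}$ (your claimed identity, including the cancellation of $a,b,c$ and the agreement of the $(2,4)$- and $(1,5)$-entries, checks out). The only cosmetic difference is that the paper verifies the one-step subgroup criterion $M_{r,s,t}M_{r',s',t'}^{-1}=M_{r-r',\,s-s',\,t-t'-2s'(r-r')}$ and derives the power formula $M_{r,s,t}^n=M_{nr,\,ns,\,nt+n(n-1)rs}$ to conclude every non-identity element has order $p$, whereas you deduce the $p$-group property directly from $|G|=q^3=p^{3h}$; both are valid.
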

\begin{proof}
Notice that $M_{r,s,t} M_{r',s',t'}^{-1} = M_{r,s,t} M_{-r',-s',2r's'-t'} = M_{r-r',s-s',t-t'-2s'(r-r')}$. On the other hand, since $M_{r,s,t}^n = M_{nr,ns,nt+n(n-1)rs}$, we have that each element of $G$ distinct from the identity has order $p$.    
\end{proof}

\begin{remark}
The group $G$ is not abelian, in particular the map $M_{r,s,t} \mapsto \left( \begin{array}{ccc} 1 & r & t/2 \\ 0 & 1 & s \\ 0 & 0 & 1 \end{array} \right)$ gives an isomorphism between $G$ and the so called Heisenberg group.
\end{remark}

We need the following preliminary results.

\begin{lemma}\label{points}
The group $G$ has $2q+3$ orbits on the points of $\PG(4,q)$:
\begin{itemize}
\item[a)] $q+1$ orbits of size one, each consisting of a point of $\ell$,
\item[b)] one orbit of size $q^2$, consisting of the points of $\pi \setminus \ell$,
\item[c)] $q+1$ orbits of size $q^3$, each consisting of the points of the set $\Pi_i \setminus \pi$. 
\end{itemize}
\end{lemma}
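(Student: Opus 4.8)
The plan is to exploit the fact that every matrix $M_{r,s,t}$ fixes the last two coordinates of a point: writing a point as $P=(x_1,x_2,x_3,x_4,x_5)^{T}$, the image $M_{r,s,t}P$ has fourth and fifth coordinates still equal to $x_4$ and $x_5$. Consequently the ratio $[x_4:x_5]$ is a $G$-invariant, so each solid $\Pi_i$ of the pencil through $\pi$ is preserved by $G$; moreover $\pi$ (where $x_4=x_5=0$) and the line $\ell\subset\pi$ are preserved as well. This already shows that every $G$-orbit is contained in one of the sets $\ell$, $\pi\setminus\ell$, or $\Pi_i\setminus\pi$, so it suffices to analyse the action on each of these pieces separately and then to count.

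For part (a), a point of $\ell$ has $x_3=x_4=x_5=0$, and then the first three columns of $M_{r,s,t}$ show that $M_{r,s,t}P=P$ for all $r,s,t$; hence each of the $q+1$ points of $\ell$ is fixed, giving $q+1$ orbits of size one. For part (b), a point of $\pi\setminus\ell$ may be normalised to $(x_1,x_2,1,0,0)^{T}$, and a direct substitution gives $M_{r,s,t}P=(x_1+r,\,x_2+s,\,1,\,0,\,0)^{T}$. Thus $G$ is transitive on the $q^{2}$ points of $\pi\setminus\ell$, the stabiliser of $P$ being the subgroup $\{M_{0,0,t}\mid t\in\GF(q)\}$ of order $q$, so the orbit has size $q^{3}/q=q^{2}$.

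The substance of the argument is part (c), where I would show that $G$ acts semiregularly, i.e.\ with trivial point stabilisers, on each $\Pi_i\setminus\pi$. Since $|\Pi_i\setminus\pi|=(q^{3}+q^{2}+q+1)-(q^{2}+q+1)=q^{3}=|G|$ and $\Pi_i\setminus\pi$ is $G$-invariant, triviality of the stabiliser of one point forces $\Pi_i\setminus\pi$ to be a single regular orbit of size $q^{3}$. To compute a stabiliser I distinguish the points with $x_5\neq 0$ (which occur for $1\le i\le q$) from those with $x_5=0$ (the case $i=q+1$). In the first case normalise $P=(x_1,x_2,x_3,\lambda,1)^{T}$, where $\lambda=x_4$ runs over all of $\GF(q)$ as $i$ runs over $1,\dots,q$ (with $\lambda=\omega^{i-1}$ for $1\le i\le q-1$ and $\lambda=0$ for $i=q$). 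Imposing $M_{r,s,t}P=P$—the projective scalar being forced to be $1$ because the fifth coordinate is fixed and nonzero—gives three polynomial equations in $r,s,t$; solving the third for $s$ and the first for $t$ and substituting into the second, I expect the terms in $x_1,x_2,x_3$ to cancel, leaving the single condition $r\,(\lambda^{3}+a\lambda^{2}+b\lambda+c)=0$. In the second case, normalising $P=(x_1,x_2,x_3,1,0)^{T}$ and repeating the computation should lead directly to $r=s=t=0$.

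This is where the irreducibility hypothesis enters, and it is the one point that must be handled carefully. Because $X^{3}+aX^{2}+bX+c$ is irreducible over $\GF(q)$ it has no root in $\GF(q)$, so $\lambda^{3}+a\lambda^{2}+b\lambda+c\neq 0$ for \emph{every} $\lambda\in\GF(q)$ (in particular the constant term $c=$ the value at $\lambda=0$ is nonzero). Hence the condition $r\,(\lambda^{3}+a\lambda^{2}+b\lambda+c)=0$ forces $r=0$, and back-substitution gives $s=t=0$; the stabiliser is therefore trivial in every case, establishing the $q+1$ orbits of (c). I would close by verifying the count: the orbit sizes sum to $(q+1)\cdot 1+q^{2}+(q+1)q^{3}=q^{4}+q^{3}+q^{2}+q+1$, which is exactly the number of points of $\PG(4,q)$, confirming that the list is complete and that the number of orbits is $(q+1)+1+(q+1)=2q+3$.
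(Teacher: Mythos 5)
Your proof is correct and takes essentially the same approach as the paper: decompose $\PG(4,q)$ into the $G$-invariant pieces $\ell$, $\pi\setminus\ell$ and $\Pi_i\setminus\pi$, handle $\pi$ by direct inspection, and reduce part (c) to a trivial-stabilizer computation whose elimination yields $r(\lambda^3+a\lambda^2+b\lambda+c)=0$, killed by irreducibility. The only cosmetic difference is that you compute the stabilizer of a general point $(x_1,x_2,x_3,\lambda,1)$ (and indeed the $x_1,x_2,x_3$ terms do cancel as you predicted), whereas the paper computes it only for the special point $(0,0,0,\lambda,1)$ and lets the orbit--stabilizer count $|G|=q^3=|\Pi_i\setminus\pi|$ finish the argument.
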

\begin{proof}
To describe the orbits on the points of $\pi$ is trivial. Since the group $G$ fixes every solid through the plane $\pi$, we need only to prove that the stabilizer in $G$ of a point of $P \in \Pi_i \setminus \pi$ is trivial. If $P = (0,0,0,\omega^{i-1},1)$, then $P^{M_{r,s,t}} = (\omega^{i-1}(r^2-ar+s)+t,\omega^{i-1}(2rs-t)+s^2+bs-cr,2r\omega^{i-1}+2s,\omega^{i-1},1)$. It follows that $M_{r,s,t}$ fixes $P$ if and only if
$$
\left\{ 
\begin{array}{l}
r\omega^{i-1}+s = 0 \\
\omega^{i-1}(r^2-ar+s)+t = 0 \\
\omega^{i-1}(2rs-t)+s^2+bs-cr = 0.
\end{array}
\right.
$$
Calculating $s$ and $t$ from the first and second equation, respectively, and substituting in the third equation we obtain $r({\omega^{3(i-1)}}+a {\omega^{2(i-1)}}+b \omega^{i-1}+c) = 0$. Hence it forces $M_{r,s,t}$ to be the identity. Analogously, if $P = (0,0,0,1,0)$ or $P = (0,0,0,0,1)$.  
\end{proof}

\begin{lemma}\label{lines}
The group $G$ has the following orbits on the lines of $\PG(4,q)$ distinct from $\ell$:
\begin{itemize}
\item[a)] $q+1$ orbits of size $q$, each consisting of the lines of $\pi$ passing through a point of $\ell$,
\item[b)] $(q+1)^2$ orbits of size $q^2$, each consisting of lines of $\Pi_i$, not in $\pi$, passing through a point of $\ell$, $1 \le i \le q+1$, 
\item[c)] $q(q+1)$ orbits of size $q^3$, each consisting of lines of $\Pi_i$ skew to $\ell$, $1 \le i \le q+1$, but intersecting $\pi$ in a point,
\item[d)] $q^3$ orbits of size $q^3$, each consisting of lines that are disjoint from $\pi$. 
\end{itemize}
In particular, every line--orbit of type $d)$ is a partial line spread.
\end{lemma}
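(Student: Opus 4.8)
The plan is to exploit three $G$-invariant features coming from Lemma \ref{points} together with the shape of the matrices $M_{r,s,t}$: the line $\ell$ is fixed pointwise, the plane $\pi$ is fixed with $\pi\setminus\ell$ a single orbit of size $q^2$, and every solid $\Pi_i$ through $\pi$ is fixed. Consequently the four classes a)--d) are $G$-invariant, and I can refine further. A line $m\not\subset\pi$ meeting $\pi$ spans with $\pi$ a unique solid $\langle m,\pi\rangle=\Pi_i$, so its containing solid is also an invariant; moreover for type a) the point $m\cap\ell\in\ell$ is fixed, while for type c) the point $m\cap\pi\in\pi\setminus\ell$ can only move within its $G$-orbit. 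It therefore suffices, for each class, to fix the relevant invariants, determine the orbits of $G$ on the remaining lines, and read off the orbit count by dividing the number of such lines by the common orbit size.

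For type a) I would first note that $M_{r,s,t}$ acts on $\pi$ as the translation $(x_1,x_2,x_3)\mapsto(x_1+rx_3,\,x_2+sx_3,\,x_3)$, so the action on $\pi$ factors through the translation group with kernel $\{M_{0,0,t}\}$. The $q$ lines of $\pi$ through a fixed point $P\in\ell$ other than $\ell$ form a parallel class of the affine plane $\pi\setminus\ell$ and are permuted transitively, each having stabiliser of order $q^2$; hence one orbit of size $q$ per point of $\ell$, giving $q+1$ orbits. For type b) I would fix a solid $\Pi_i$ and a point $P\in\ell$, parametrise the $q^2$ lines through $P$ lying in $\Pi_i$ but not in $\pi$ by a normalised second point, and compute the images under $M_{r,s,t}$ to show that $G$ acts transitively on them; this yields one orbit of size $q^2$ for each of the $(q+1)^2$ pairs $(\Pi_i,P)$.

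Types c) and d) I would handle by proving that the relevant lines have trivial stabiliser, so that every orbit has size $q^3=|G|$ and the count follows by division. If $m$ is a line of type c) with $m\cap\pi=\{Q\}$, $Q\in\pi\setminus\ell$, then any stabilising element fixes $Q$, hence lies in $\mathrm{Stab}_G(Q)=\{M_{0,0,t}\}$; but a nonidentity $M_{0,0,t}$ moves an off-$\pi$ point $R$ of $m$ to $R+tv$, where $v$ represents a point of $\ell$, and since $m$ is skew to $\ell$ this changes the line. Thus the stabiliser is trivial, and dividing the $q^4$ lines of this type in $\Pi_i$ by $q^3$ gives $q$ orbits per solid, hence $q(q+1)$ in total. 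If $m$ is of type d), then being disjoint from $\pi$ it meets each of the $q+1$ solids $\Pi_i$ in exactly one point, all lying off $\pi$; a stabilising element fixes each of these $q+1\ge 3$ collinear points and so fixes $m$ pointwise, in particular fixing an off-$\pi$ point, whose stabiliser is trivial by Lemma \ref{points}. Hence the stabiliser is trivial, and the $q^6$ lines disjoint from $\pi$ split into $q^6/q^3=q^3$ orbits.

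Finally, for the partial--spread assertion it is enough to show that a type d) line $m_0$ is disjoint from each image $m_0^N$ with $N\neq 1$. A common point $P\in m_0\cap m_0^N$ is off $\pi$, hence lies in a unique solid $\Pi_j$; there it equals both $P_j:=m_0\cap\Pi_j$ and $(m_0\cap\Pi_j)^N=P_j^{\,N}$, so $N$ fixes the off-$\pi$ point $P_j$, forcing $N=1$ by Lemma \ref{points}. I expect the main obstacle to be the explicit transitivity computation in type b), which must moreover be verified uniformly across the three shapes of $\Pi_i$ (the equations $X_4=\omega^{i-1}X_5$, $X_4=0$, $X_5=0$); by contrast the conceptual cores of c), d) and the spread claim are the short stabiliser arguments above, all resting on the point--orbit data of Lemma \ref{points}.
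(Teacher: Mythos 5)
Your proposal is correct and follows essentially the same route as the paper: both rest on the $G$-invariance of $\ell$, $\pi$ and the solids $\Pi_i$, the point-orbit and stabilizer data of Lemma \ref{points}, and orbit--stabilizer counting, with the elation-type elements $M_{0,0,t}$ (moving points along lines through $\ell$) doing the work in case c). The only cosmetic differences are that you prove triviality of the line stabilizers directly, and establish the partial-spread claim by showing a common point of two orbit lines would force the group element to be the identity, whereas the paper computes the sub-orbit under the point stabilizer in case c) and deduces disjointness in case d) from a covering count; these are equivalent reformulations of the same argument.
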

\begin{proof}
To describe orbits of type $a)$ or $b)$ is trivial. 

\medskip
\fbox{Line--orbits of type $c)$}
\medskip

\par\noindent
Let $x$ be a line of $\Pi_i$, for some $i$, such that $x$ is skew to $\ell$ and let $X = x \cap \pi$. The stabilizer of $X$ in $G$, say $G_X$, is the group of order $q$ consisting of elements of type $M_{0,0,t}$. In particular, $G_X$ is an elation group having as axis the plane $\pi$ and as center the line $\ell$, and every orbit of $G_X$ on points not in $\pi$ consists of $q$ points of a line meeting $\ell$ in one of its points. It follows that $x^{G_X}$ consists of $q$ lines through $X$ in a plane. Since the group $G$ permutes the $q^2$ points of $\pi \setminus \ell$ in a single orbit, we have that the orbit $x^G$ contains $q^3$ lines.

\medskip
\fbox{Line--orbits of type $d)$}
\medskip

\par\noindent
Let $y$ be a line skew to $\pi$. Since $y$ cannot be contained in a hyperplane $\Pi_i$, $1 \le i \le q+1$, we have that $y$ meets $\Pi_i$ in a point, say $Y_i$, $1 \le i \le q+1$. The stabilizer of $y$ in $G$ is trivial. Indeed, assume on the contrary that $id \ne g \in G$ fixes the line $y$. Then, since $G$ fixes $\Pi_i$, we would have that $g$ fixes $y \cap \Pi_i = Y_i$, which is a contradiction, since, from Lemma \ref{points}, $G$ acts sharply transitive on the points of $\Pi_i \setminus \pi$. It follows that the orbit $y^G$ consists of $q^3$ lines such that the points covered by its members are all the $q^4+q^3$ points of $\PG(4,q) \setminus \pi$. As a consequence, lines in $y^G$ are pairwise skew.    
\end{proof}

We are ready to prove our main result of this section.

\begin{theorem}\label{code}
There exists a set $\cL$ consisting of $q^3+1$ pairwise skew lines and a set $\cP$ consisting of $q^3+1$ planes mutually intersecting in exactly a point, such that no line of $\cL$ is contained in a plane of $\cP$. 
\end{theorem}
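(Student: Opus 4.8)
The plan is to build $\cL$ directly from the partial spreads produced in Lemma \ref{lines} and to obtain $\cP$ by dualizing, so that the skewness of the lines and the point-intersection of the planes come for free from the orbit structure and the duality, leaving only the cross-incidence condition to be verified by hand.

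First, for $\cL$: fix a line $y$ skew to $\pi$ and set $\cL = y^G \cup \{\ell\}$. By Lemma \ref{lines}(d), the orbit $y^G$ is a partial line spread consisting of exactly $q^3$ lines whose union of points is precisely $\PG(4,q)\setminus\pi$. Since $\ell\subseteq\pi$, it is disjoint from every member of $y^G$, so $\cL$ is a set of $q^3+1$ pairwise skew lines. (In fact any further skew line must avoid all points off $\pi$, hence lie in $\pi$, so $\ell$ is the natural extra line; note also that $\ell$ is pointwise fixed by $G$, by Lemma \ref{points}(a).)

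Next, for $\cP$: I would fix a correlation $\sigma$ of $\PG(4,q)$ — a duality induced by a nondegenerate bilinear form with Gram matrix $J$ — chosen so that $\sigma$ normalizes $G$; since $G$ is the Heisenberg group, this can be arranged by solving $M_{r,s,t}^{-T} = J\,M_{r',s',t'}\,J^{-1}$ for a fixed $J$. Set $\cP = \sigma(\cL)$. A correlation reverses inclusions and sends a line to a plane while replacing $\dim(U\cap W)=-1$, $\dim(U+W)=3$ (two skew lines) by $\dim(\sigma U\cap\sigma W)=0$, $\dim(\sigma U+\sigma W)=4$ (two planes meeting in exactly a point). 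Hence $\cP$ is automatically a set of $q^3+1$ planes pairwise meeting in a point. Because $\sigma$ normalizes $G$, one has $\cP = (\sigma y)^G \cup \{\sigma\ell\}$, the union of a plane-orbit of size $q^3$ and the single $G$-fixed plane $\sigma(\ell)$ — the exact dual of Lemma \ref{lines}.

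Finally, the cross-condition, which I expect to be the main obstacle. Writing $\sigma(m') = (m')^{\perp}$ for the polarity of $J$, a line $m\in\cL$ lies in the plane $\sigma(m')\in\cP$ if and only if every vector of $m$ is orthogonal to every vector of $m'$, a symmetric relation I denote $m\perp m'$. Thus the requirement that no line of $\cL$ be contained in a plane of $\cP$ is equivalent to: no member of $\cL$ is totally isotropic, and no two distinct members of $\cL$ are mutually perpendicular. Since $\cL$ is $G$-invariant and $\sigma$ normalizes $G$, the relation $\perp$ is $G$-invariant on $\cL$, so it suffices to check finitely many orbit-representative statements, namely $\ell\not\perp\ell$, $y\not\perp y$, $\ell\not\perp y$, and $y\not\perp h(y)$ for every $h\in G\setminus\{id\}$ (this last being a single polynomial identity in $r,s,t$). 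The real work — and the step I expect to be delicate — is choosing $y$ together with the form $J$ so that all of these non-orthogonalities hold simultaneously; once that choice is made, every other requirement follows formally from Lemmas \ref{points} and \ref{lines} and the dimension-reversing properties of $\sigma$.
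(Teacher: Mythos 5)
Your construction of $\cL$ is fine, but the dualization step has a structural flaw, and it is precisely your choice of $\ell$ as the extra line that makes it unrepairable. Any correlation $\sigma$ with $\sigma G\sigma^{-1}=G$ must map $G$-fixed points to $G$-fixed hyperplanes. By Lemma \ref{points}, the fixed points of $G$ are exactly the $q+1$ points of $\ell$; a short computation with the transposed matrices $M_{r,s,t}^{T}$ shows that the fixed hyperplanes of $G$ are exactly the $q+1$ solids $\Pi_i$ through $\pi$ (writing a hyperplane as $a_1X_1+\dots+a_5X_5=0$, the fixed-vector conditions $ra_1+sa_2=0$ and $2ra_3=0$ for all $r,s$ force $a_1=a_2=a_3=0$). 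Now take two distinct points $P,Q\in\ell$. Since $\sigma$ reverses inclusions, $\sigma(\ell)\subseteq\sigma(P)\cap\sigma(Q)$, which is the intersection of two distinct solids through $\pi$, i.e.\ $\pi$ itself; and $\sigma(\ell)$ is a plane, so $\sigma(\ell)=\pi$. Hence $\ell\subseteq\sigma(\ell)$: in your own terminology, $\ell$ is forced to be totally isotropic for every form whose associated correlation normalizes $G$, so the requirement $\ell\not\perp\ell$ can never be met, no matter how $y$ and $J$ are chosen. The line $\ell\in\cL$ always lies in the plane $\pi=\sigma(\ell)\in\cP$, and the code has distance $2$ at that pair. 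The cure is to decouple the extra line from the extra plane, which is what the paper does: it adjoins a line $r\subset\pi$ with $r\ne\ell$ to the line orbit, and a plane $\xi\supset\ell$ with $\xi\ne\pi$ to the plane orbit; then $r\cap\xi\subseteq r\cap\ell$ is a point, so $r\not\subset\xi$.

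Beyond this, what remains of your argument is a plan rather than a proof: the existence of a $J$ normalizing $G$ is asserted ("can be arranged") but never established, and the simultaneous non-orthogonalities $y\not\perp ky$ for all $k\in G$ are explicitly deferred as "the real work" --- but these two steps are exactly the content of the theorem. For comparison, the paper's proof avoids orthogonality entirely: it takes a plane $\alpha$ meeting $\pi$ in one point off $\ell$, proves via a tactical-configuration count that the orbit $\alpha^G$ consists of $q^3$ planes pairwise meeting in a point, then notes that $\alpha$ contains only $q^2$ lines disjoint from $\pi$ while Lemma \ref{lines} provides $q^3$ line-orbits of type d), so some type-d) orbit $\cL'$ has no member inside $\alpha$ (hence, by $G$-invariance, none inside any plane of $\alpha^G$), and finally appends $r$ and $\xi$ as above. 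Your duality instinct is not misguided in itself --- the paper dualizes in its preliminary bounds and uses the Klein-quadric polarity in the even-characteristic case --- but here it collides head-on with the fixed-point structure of $G$.
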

\begin{proof}
Let $\alpha$ be a plane of $\PG(4,q)$ such that $\alpha \cap \pi$ is a point $A$ not belonging to $\ell$. Then $\alpha \cap \Pi_i$, $1 \le i \le q+1$, is a line passing through the point $A$. Let $\cP' = \alpha^G$. We claim that $\cP'$ is a set consisting of $q^3$ planes pairwise intersecting in a point. Indeed, assume on the contrary that there exist two distinct planes $\alpha, \alpha'$ in $\cP'$ such that $\alpha \cap \alpha'$ is a line, say $b$. Let $B$ be the line $G$--orbit containing $b$. Two possibilities occur: either $b$ meets $\pi$ in a point not on $\ell$, or $b$ is disjoint from $\pi$. 

If the former case occurs, there exists a solid $\Pi_i$ such that $b \in \Pi_i$, for some $i$, and $B$ is a line--orbit of type $c)$. Consider the tactical configuration whose points are the elements of $B$ and whose blocks are the elements of $\cP'$. Since $|B| = |\cP'| = q^3$ and through $b$, there pass at least two distinct elements of $\cP'$, it follows that $\alpha$ contains at least two distinct elements of $B$. Hence $\alpha$ is a plane of $\Pi_i$ and so $\alpha \cap \pi$ is a line, a contradiction. 

If the latter case occurs, then $B$ is a line--orbit of type $d)$. Consider again the tactical configuration whose points are the elements of $B$ and whose blocks are the elements of $\cP'$. Arguing as in the previous case, $\alpha$ contains at least two distinct elements of $B$, say $b_1$ and $b_2$, but then $b_1$ meets $b_2$ in a point, contradicting the fact that two distinct lines in $B$ are skew. The plane $\alpha$ contains $q^2$ lines that are disjoint from $\pi$ and each one of them belongs to a distinct line--orbit of type $d)$. Since there are $q^3$ line--orbits of type $d)$, it follows that we can find a line--orbit of type $d)$, say $\cL'$, such that no line in $\cL'$ is contained in $\alpha$. 

Let $r$ be a line of $\pi$ distinct from $\ell$ and let $\cL:=\cL' \cup \{ r \}$. Then $\cL$ is a set consisting of $q^3+1$ pairwise skew lines. Let $\xi$ be a plane passing through the line $\ell$ distinct from $\pi$ and let $\cP := \cP' \cup \{ \xi \}$. Then, every plane in $\cP'$ shares a point with $\xi$. Indeed, if there was a plane $\beta \in \cP'$ meeting $\xi$ in a line, then $\xi \cap \pi \in \ell$, a contradiction. Hence, $\cP$ is a set consisting of $q^3+1$ planes mutually intersecting in a point. In particular, no line of $\cL$ is contained in a plane of $\cP$, as required.   
\end{proof}

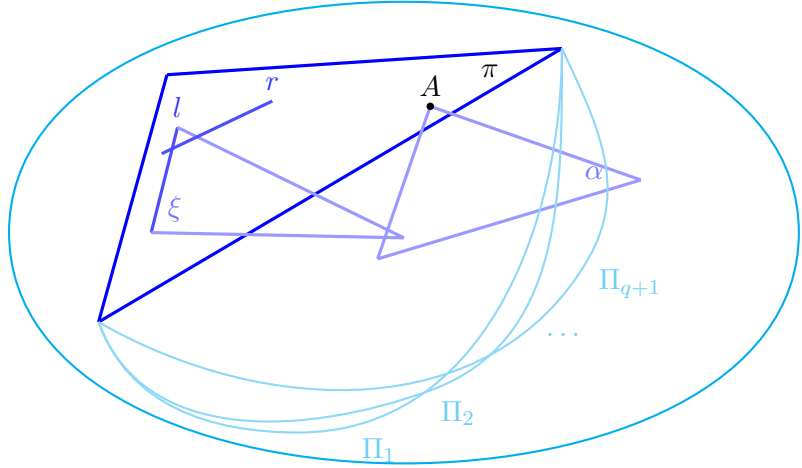
\begin{figure}[h!]
\centering
\begin{tikzpicture}[scale=0.7]
\draw [thick, cyan] (0,0)  to [out=90,in=90] (15,0) to [out=270,in=270] (0,0); 
\draw [very thick, blue] (3,3)  --(10.5,3.5) coordinate (b) ;
\draw [very thick, blue] (3,3)  --(1.7,-1.7) coordinate (a) ;
\draw [very thick, blue] (1.7,-1.7)  --(10.5,3.5) ;
\node [below left, thick] at (9.5,3.4) {$\pi$};

\draw [very thick, blue!40] (8,2.4)  --(12,1) ;
\draw [very thick, blue!40] (12,1)  --(7,-0.5) ;
\draw [very thick, blue!40] (7,-0.5)  --(8,2.4) ;
\node [above, thick] at (8,2.4) {$A$};
\node [above left, very thick, blue!55] at (11.5,0.8) {$\alpha$};
\node[fill=black,circle,inner sep=1pt] at (8,2.4) {};

\draw [very thick, blue!70] (3.2,2)  --(2.7,0) ;
\draw [very thick, blue!40] (2.7,0)  --(7.5,-0.1) ;
\draw [very thick, blue!40] (7.5,-0.1)  --(3.2,2) ;
\node [above, thick, blue!80] at (3.2,2) {$l$};
\node [above right, very thick, blue!55] at (2.8,0) {$\xi$};

\draw [very thick, blue!70] (2.9,1.5)  --(5,2.5) ;
\node [above, thick, blue!80] at (5,2.5) {$r$};

\draw [thick, cyan!40] (a)  to [out=290,in=180] (5.5,-3.8) to [out=0,in=270] (b); 
\draw [thick, cyan!40] (a)  to [out=290,in=200] (8,-3) to [out=20,in=270] (b); 
\draw [thick, cyan!40] (a)  to [out=330,in=240] (11,-0.5) to [out=60,in=295] (b);
\node [below right, thick, cyan!50] at (6.5,-3.7)  {$\Pi_1$};
\node [below right, thick, cyan!50] at (8,-3) {$\Pi_2$};
\node [below right, thick, cyan!50] at (11,-0.5)  {$\Pi_{q+1}$};
\node [below right, thick, cyan!50] at (10,-1.7)  {$\ldots$};
\end{tikzpicture}
\caption{Construction for $q$ odd} \label{qoneven}
\end{figure}

Figure \ref{qoneven} illustrates the different subspaces used in this code construction in PG$(4,q)$, $q$ odd. The figure shows the plane $\pi$ and the line $\ell$, stabilized by the automorphism group $G$. It shows the hyperplanes $\Pi_i$ passing through the plane $\pi$. The figure also shows one plane $\alpha$ of the orbit of planes in the code. Finally, the figure also shows the line $r$ and the plane $\xi$ through the line $\ell$, which are added to the code as the final two codewords.

\begin{remark}
With the same notation as in Theorem \ref{code}, the code $\cC = \cP \cup \cL$ is an optimal code of type $IV)$. Let $X$ be a point of $\ell \setminus r$, then the code $\cC = \cP' \cup \cL \cup \{ X \}$ is an optimal code of type $I)$. Let $j$ be such that $\Pi_j = \langle \pi, \xi \rangle$, then the code $\cC = \cP \cup \cL' \cup \{ \Pi_k \}$, $k \ne j$, is an optimal code of type $II)$. Finally, the code $\cC = \cP' \cup \cL' \cup \{ X, \Pi_i \}$ is an optimal code of type $III)$.
\end{remark}

\section{The even characteristic case}

Let $q = 2^h$. Let $\PG(4,q)$ be the four dimensional projective space over $\GF(q)$ equipped with homogeneous coordinates $(X_1, X_2, X_3, X_4, X_5)$, let $\pi$ be the projective plane with equations $X_4 = X_5 = 0$ and let $\ell$ be the line of $\pi$ with equations $X_1 = X_4 = X_5 = 0$. Denote by $\Sigma$ the solid of $\PG(4,q)$ with equation $X_1 = 0$. Then $\Sigma \cap \pi = \ell$. Let $\cH$  be  the hyperbolic quadric of $\Sigma$ having equation $X_2 X_5 + X_3 X_4 = 0$. 

Let $\alpha \in \GF(q)$ be such that the polynomial $X^2+X+\alpha = 0$ is irreducible over $\GF(q)$ and let $G = \{ M_{a,b,c,d} \;\; | \;\;  a \in \GF(q) \setminus \{ 0 \}, b,c,d \in \GF(q), c^2+cd+\alpha d^2 = 1 \}$, where
$$
M_{a,b,c,d} = \left(
\begin{array}{ccccc}
1 & 0 & 0 & 0 & 0 \\
0 & ac & \alpha ad & bc & \alpha bd \\
0 & ad & a(c+d) & bd & b(c+d) \\
0 & 0 & 0 & a^{-1}c & \alpha a^{-1}d \\
0 & 0 & 0 & a^{-1}d & a^{-1}(c+d)
\end{array}
\right) .
$$
Then $G$ is a group of order $q^3-q$ with structure $C_{q+1} \times (E_q \times C_{q-1})$. In particular the subgroup $G_1 = \{M_{1,0,c,d} \;\; | \;\; c,d \in \GF(q), c^2+cd+\alpha d^2 =1\}$ is a cyclic group of order $q+1$, the subgroup $G_2 = \{M_{1,b,1,0} \;\; | \;\; b \in \GF(q)\}$ is an elementary abelian group of order $q$ and the subgroup $G_3 = \{M_{a,0,1,0} \;\; | \;\; a \in \GF(q) \setminus \{0\}\}$ is a cyclic group of order $q-1$.

As we will see in the next lemma, the group $G$ fixes a pencil of quadrics, say $\cF$. The pencil $\cF$ comprises $q-1$ parabolic quadrics $\cQ_i$, $1 \le i \le q-1$, the solid $\Sigma$ and the cone $\cC$ having as vertex the point $N = (1,0,0,0,0)$ and as base $\cH$. The base locus of $\cF$ is $\cH$ and $\cF$ is generated by the quadrics having equation $X_1^2 = 0$ and $X_2 X_5 + X_3 X_4 = 0$. The nucleus of the parabolic quadric $\cQ_i$ is the point $N$, $1 \le i \le q-1$. 

We need the following preliminary results.

\begin{lemma}\label{point}
The group $G$ has $q+5$ orbits on points of $\PG(4,q)$:
\begin{itemize}
\item[a)] the point $N$,
\item[b)] an orbit of size $q+1$, consisting of the points of $\ell$,
\item[c)] an orbit of size $q^2-1$, consisting of the points of $\pi \setminus (\ell \cup \{ N \})$,
\item[d)] an orbit of size $q^2+q$, consisting of the points of $\cH \setminus \ell$,
\item[e)] an orbit of size $q^3-q$, consisting of the points of $\Sigma \setminus \cH$,
\item[f)] an orbit of size $q^3-q$, consisting of the points of $\cC \setminus (\pi \cup \cH)$,
\item[g)] $q-1$ orbits of size $q^3-q$, each consisting of the points of $\cQ_i \setminus \cH$, $1 \le i \le q-1$.   
\end{itemize}
\end{lemma}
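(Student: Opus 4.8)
The plan is to first identify the $G$--invariant geometric objects and then to recover each of the seven sets a)--g) as a single $G$--orbit, the orbit sizes being forced by orbit--stabiliser once invariance and a cardinality count are in place.

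The first step is invariance. From the shape of $M_{a,b,c,d}$ the group fixes the point $N$, the hyperplane $\Sigma\colon X_1=0$, the plane $\pi\colon X_4=X_5=0$ and the line $\ell$, since the first coordinate is preserved and the lower right $2\times 2$ block keeps $\{X_4=X_5=0\}$ invariant. The crucial point is that $G$ preserves the quadratic form $Q=X_2X_5+X_3X_4$ exactly: writing $X'=M_{a,b,c,d}X$ and expanding $X_2'X_5'+X_3'X_4'$, the terms carrying the parameter $b$ occur in equal pairs and cancel in characteristic two, while the remaining part collapses to $Q$ because $2cd=0$ and the coefficients of both $X_2X_5$ and $X_3X_4$ reduce to $c^2+cd+\alpha d^2=1$. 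As $X_1$ is fixed as well, $G$ fixes every member $\lambda X_1^2+\mu Q$ of the pencil $\cF$, hence the base locus $\cH$, the cone $\cC$ and each parabolic quadric $\cQ_i$. Therefore each set a)--g) is $G$--invariant, i.e.\ a union of $G$--orbits.

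The second step reduces the claim to transitivity. Since a point off $\cH$ lies on exactly one member of $\cF$, the seven sets are pairwise disjoint and cover $\PG(4,q)$, and their sizes $1,\ q+1,\ q^2-1,\ q^2+q,\ q^3-q,\ q^3-q,\ (q-1)(q^3-q)$ sum to $q^4+q^3+q^2+q+1$. Thus it is enough to show $G$ is transitive on each set, which then produces exactly $6+(q-1)=q+5$ orbits. I would treat the sets using the subgroups $G_1\cong C_{q+1}$, $G_2\cong E_q$, $G_3\cong C_{q-1}$. The point $N$ is fixed. On $\ell$ the subgroup $G_1$ acts as the norm--one torus of $\GF(q^2)$ on $\PG(1,q)$, hence regularly, giving the orbit of size $q+1$. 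On $\pi$ the elations of $G_2$ act trivially, while $G_1G_3$ realises the full group $\GF(q^2)^\ast$ acting by multiplication on the affine plane $\pi\setminus\ell\cong\GF(q^2)$ and fixing $N$; this is transitive on $\pi\setminus(\ell\cup\{N\})$, of size $q^2-1$, with stabiliser $G_2$. On $\cH\setminus\ell$ the elations in $G_2$ move a point along the line of $\cH$ through it meeting $\ell$, covering the $q$ points off $\ell$, while $G_1$ permutes these lines transitively through its regular action on $\ell$; hence $G_1G_2$ is transitive, of orbit size $q^2+q$. Finally, for the three sets in e), f), g), each of size $q^3-q=|G|$, I would check that the stabiliser of a convenient representative --- say $(0,1,0,0,1)$ for e), $(1,0,0,0,1)$ for f), and a point of $\cQ_i$ off $\cH$ for g) --- is trivial: the system $M_{a,b,c,d}P=\rho P$ forces in turn $d=0$, then $c=1$, then $b=0$, then $a=1$. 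Each of these sets is therefore a single, sharply transitive, orbit.

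The main obstacle I expect is twofold. The first is the invariance computation for $Q$: it is the only place where the relation $c^2+cd+\alpha d^2=1$ is genuinely used, and one must be careful with the characteristic--two cancellations. The second is the transitivity on $\cH\setminus\ell$, where the elation orbits of $G_2$ have to be matched correctly with the ruling structure of the hyperbolic quadric. By contrast the three large orbits e), f), g) are the easiest, reducing to a short triangular elimination that trivialises the point--stabiliser.
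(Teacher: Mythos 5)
Your proof is correct and follows essentially the same route as the paper's: show that $G$ fixes the pencil $\cF$ (so each of the seven sets is $G$--invariant), then get transitivity on each set via the subgroups $G_1$, $G_2$, $G_3$ and orbit--stabilizer counting, with trivial stabilizers on the three sets of size $q^3-q$. The only differences are cosmetic: where the paper records the stabilizers $G_2$ and $G_3$ for the orbits in c) and d), you exhibit the transitive actions directly (the $\GF(q^2)^*$--multiplication on $\pi\setminus\ell$ and the $\cR_2$--ruling of $\cH$), and you write out the ``straightforward computations'' (exact invariance of $X_2X_5+X_3X_4$ and the triangular elimination $d=0$, $c=1$, $b=0$, $a=1$) that the paper leaves implicit.
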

\begin{proof}
The group $G$ fixes the point $N$ and the line $\ell$. On the other hand, the subgroup $G_1$ acts transitively on the points of $\ell$. If $P \in \pi \setminus (\ell \cup \{ N \})$, then the stabilizer of $P$ in $G$ is the subgroup $G_2$. Whereas, if $P = (0,0,0,0,1,0) \in \cH \setminus \ell$, then the stabilizer of $P$ in $G$ is the subgroup $G_3$. Finally, straightforward computations show that if $g \in G$ and $P$ belongs either to $\Sigma \setminus \cH$, or to $\cC \setminus (\pi \cup \cH)$, or to $\cQ_i \setminus \cH$, $1 \le i \le q-1$, then $P^g$ is a point of $\Sigma \setminus \cH$, $\cC \setminus (\pi \cup \cH)$, $\cQ_i \setminus \cH$, $1 \le i \le q-1$, respectively and $P^g = P$ if and only if $g$ is the identity. 
\end{proof}

Let $\cR_1$ be the regulus of $\cH$ containing $\ell$ and let $\cR_2$ be its opposite regulus. Since $q$ is even, the lines of $\Sigma$ that are tangent to $\cH$, together with the $2(q+1)$ lines of $\cR_1 \cup \cR_2$, are the $(q+1)(q^2+1)$ lines of a symplectic polar space $\cW(3,q)$ of $\Sigma$. See \cite{H1} for more details. Since the group $G$ fixes $\cH$, it follows that $G$ fixes $\cW(3,q)$, also. We denote by $\cT$ the set of $q^3-q$ lines of $\cW(3,q)$ having exactly one point in common with $\cH \setminus \ell$. In the next results we summarize useful information regarding the action of the group $G$ and its subgroups.

\begin{lemma}\label{reg}
Let $t$ be a line of $\cT$, then $t^{G_2} \cup \{ \ell \}$ is a regulus of $\Sigma$.
\end{lemma}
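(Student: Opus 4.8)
The plan is to reduce to a single convenient line of $\cT$ and then produce an explicit hyperbolic quadric carrying the whole orbit. First I would record how $G_2$ acts on $\Sigma$: a direct computation gives that $M_{1,b,1,0}$ sends the point $(X_2,X_3,X_4,X_5)$ of $\Sigma$ to $(X_2+bX_4,\,X_3+bX_5,\,X_4,\,X_5)$. In particular $G_2$ fixes $\ell$ pointwise and stabilizes $\cH$, the offending cross term $2bX_4X_5$ vanishing in characteristic two. Since $G_2\trianglelefteq G$ (it is the normal factor $E_q$ of the stated structure of $G$) and $G$ fixes $\ell$, for any $g\in G$ we have $(t^g)^{G_2}\cup\{\ell\}=(t^{G_2}\cup\{\ell\})^g$, the collineation image of $t^{G_2}\cup\{\ell\}$; hence it suffices to treat one representative $t$, provided $G$ is transitive on $\cT$. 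This transitivity I would establish using Lemma~\ref{point}: $G$ is already transitive on the contact points $\cH\setminus\ell$, and the stabilizer $G_3$ of the point $(0,0,0,1,0)$, of order $q-1$, scales the parameter of a tangent line there by $a^{-2}$; as squaring is a bijection of $\GF(q)$ in even characteristic, $G_3$ is transitive on the $q-1$ tangent lines through that point, and transitivity on $\cT$ follows.

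I would then take the representative $t=\langle(0,0,0,1,0),(0,1,0,0,\nu)\rangle$ with $\nu\neq0$, tangent to $\cH$ at $P=(0,0,0,1,0)$, and compute $t^{G_2}$. Applying $M_{1,b,1,0}$ moves the contact point $P$ to $(0,b,0,1,0)$, and these $q$ points are precisely the points other than $m\cap\ell$ of the generator $m\in\cR_2$ through $P$; thus the images $t_b$ have pairwise distinct contact points and $t^{G_2}$ consists of $q$ distinct lines. A short substitution shows that $\ell$ and every $t_b$ lie on the quadric of $\Sigma$ given by
\[
\cH':\qquad X_2X_5+X_3X_4+\nu^{-1}X_5^{2}=0 .
\]
Its polar form coincides with that of $\cH$ and is therefore nondegenerate, so $\cH'$ is nondegenerate; containing the line $\ell$, it is hyperbolic. (Consistently, $\cH'$ is $G_2$-invariant, as it must be since $G_2$ permutes its generators $\ell,t_0,\dots,t_{q-1}$.)

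Finally I would check that these $q+1$ lines are pairwise skew: equating the equations of $t_b$ and $t_{b'}$, or of $t_b$ and $\ell$, forces $X_4=X_5=0$ and then $X_2=X_3=0$, so no projective point survives. Hence $t^{G_2}\cup\{\ell\}$ is a set of $q+1$ mutually skew lines lying on the hyperbolic quadric $\cH'$. Since each regulus of a hyperbolic quadric in $\PG(3,q)$ has exactly $q+1$ lines while two lines from opposite reguli always meet, these $q+1$ skew generators fill one regulus of $\cH'$, which is the claim.

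The step I expect to be most delicate is isolating the invariant quadric $\cH'$ and certifying that $\ell$ and $t$ land in the \emph{same} regulus---equivalently, that they are skew rather than concurrent. The geometric reason this succeeds is the transversal $m\in\cR_2$ through the contact point of $t$, which meets $\ell$ and every line of $t^{G_2}$; recognizing $m$ as a common transversal could replace the coordinate verification by a synthetic regulus argument.
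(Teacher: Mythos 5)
Your proof is correct, but it takes a genuinely different route from the paper's. The paper argues synthetically, with no coordinates for the orbit and no transitivity statement: it considers the parabolic congruence $\rho$ of $\cW(3,q)$ with axis $\ell$, checks that $G_2$ fixes every line of $\rho \setminus \{\ell\}$, so that the $q+1$ lines of $\rho$ meeting a given $t \in \cT$ are fixed common transversals, pairwise disjoint because $\cW(3,q)$ is a generalized quadrangle; it then concludes that $t^{G_2} \cup \{\ell\}$ consists of $q+1$ lines covering $(q+1)^2$ points and meeting three pairwise disjoint lines, hence is a regulus. This works for an arbitrary $t \in \cT$ directly. Your reduction to a single representative instead requires two ingredients: normality of $G_2$ in $G$ (true, but since the paper's structure statement $C_{q+1} \times (E_q \times C_{q-1})$ is loose about which products are semidirect, it deserves the one-line check $M_{a,0,1,0} M_{1,b,1,0} M_{a,0,1,0}^{-1} = M_{1,a^2b,1,0}$ together with the fact that $G_1$ centralizes $G_2$), and transitivity of $G$ on $\cT$, which is precisely the paper's Lemma \ref{tan}; that lemma is stated \emph{after} Lemma \ref{reg} in the paper, but its proof is independent of it, so there is no circularity, and your Frobenius-squaring argument is just the paper's homology argument in coordinates. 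What your approach buys is an explicit equation, $X_2X_5 + X_3X_4 + \nu^{-1}X_5^2 = 0$, for a hyperbolic quadric carrying the regulus, which the paper never exhibits; what the paper's approach buys is brevity and independence from Lemma \ref{tan}, and it also produces the opposite regulus (among the fixed transversals). Your closing remark about the transversal $m \in \cR_2$ through the contact point is exactly the germ of the paper's proof: $m$ is one of the $G_2$-fixed lines of $\rho$, and the paper simply uses all $q+1$ of them.
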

\begin{proof}
The group $G_2$ fixes pointwise the plane $\pi:X_4=X_5=0$ and in particular the line $\ell$. Since $G_2$ fixes $\cW(3,q)$ it follows that the parabolic congruence $\rho$ of $\cW(3,q)$ having as axis the line $\ell$ is fixed by $G_2$. A straightforward computation shows that every line of $\rho$, distinct from $\ell$, is fixed by $G_2$. In particular, if $r$ is  a line of $\rho \setminus \{\ell\}$, then $G_2$ permutes the $q$ points of $r \setminus \{r \cap \ell\}$ in a single orbit. Let $t \in \cT$. Then $t$ is disjoint from $\ell$. Since through every point of $\Sigma \setminus \{\ell\}$ there is exactly one line of the parabolic congruence $\rho$ containing it, we have that there are $q+1$ lines of $\rho$, say $r_1, \dots, r_{q+1}$, each meeting $t$ in a distinct point. The lines $r_1, \dots, r_{q+1}$ are pairwise disjoint, otherwise there would exist a triangle consisting of lines of $\cW(3,q)$, contradicting the fact that $\cW(3,q)$ is a generalized quadrangle. The assertion, now, follows from the fact that $t^{G_2} \cup \{ \ell \}$ consists of $q+1$ lines covering $(q+1)^2$ points of $\Sigma$ and meeting three pairwise disjoint lines.   
\end{proof}

\begin{lemma}\label{tan}
The group $G$ acts transitively on the elements of $\cT$. 
\end{lemma}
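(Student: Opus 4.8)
The plan is to exploit the numerical coincidence $|\cT| = q^3-q = |G|$. Since $G$ fixes $\cH$, the line $\ell$ and the symplectic space $\cW(3,q)$, it preserves the defining property of $\cT$ and hence permutes $\cT$. Therefore it suffices to show that $G$ acts \emph{semiregularly} on $\cT$, i.e. that the stabiliser $G_t$ of a single line $t \in \cT$ is trivial: a trivial line--stabiliser forces every $G$--orbit on $\cT$ to have length $|G| = |\cT|$, so there is a unique orbit and $G$ is transitive.

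First I would reduce the line--stabiliser to a point--stabiliser. If $t \in \cT$ and $g \in G_t$, then $g$ fixes $t$ setwise and fixes $\cH$, hence fixes the unique point $P := t \cap \cH \in \cH \setminus \ell$; thus $G_t \le G_P$. By Lemma \ref{point} $d)$ the group $G$ is transitive on the points of $\cH \setminus \ell$, so after replacing $t$ by a suitable $G$--translate (which conjugates, hence does not change triviality of $G_t$) I may assume that $P$ is the distinguished point $(0,0,0,0,1)$, whose stabiliser is $G_P = G_3 = \{ M_{a,0,1,0} \;|\; a \in \GF(q)\setminus\{0\}\}$, a cyclic group of order $q-1$. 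It then remains to prove that no nontrivial element of $G_3$ fixes $t$.

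The heart of the argument is an explicit description of the lines of $\cT$ through $P$ together with the action of $G_3$ on them. There are exactly $q-1$ such lines: through $P$ pass $q+1$ lines of $\cW(3,q)$, two of which are the generators of $\cH$ lying in $\cR_1$ and $\cR_2$, while the remaining $q-1$ tangent lines meet $\cH$ only in $P \in \cH\setminus\ell$ and hence belong to $\cT$. Working in the tangent plane to $\cH$ at $P$ (the plane $X_1 = X_2 = 0$), one checks that these tangent lines are $t_\lambda = \langle (0,0,0,0,1),\,(0,0,1,\lambda,0)\rangle$ with $\lambda \in \GF(q)\setminus\{0\}$, each meeting $\cH$ exactly in $P$ and being totally isotropic for the symplectic form. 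Since $M_{a,0,1,0} = \mathrm{diag}(1,a,a,a^{-1},a^{-1})$ fixes $P$ and sends $(0,0,1,\lambda,0)$ to $(0,0,a,a^{-1}\lambda,0) = (0,0,1,a^{-2}\lambda,0)$, it maps $t_\lambda$ to $t_{a^{-2}\lambda}$.

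Finally I would invoke that $q$ is even: the map $a \mapsto a^{2}$ is an automorphism of $\GF(q)$, so $a \mapsto a^{-2}$ permutes $\GF(q)\setminus\{0\}$ regularly. Hence $G_3$ acts regularly on $\{ t_\lambda \}$, and in particular $a^{-2}\lambda = \lambda$ forces $a=1$. Thus $G_t \le G_3$ is trivial, which yields the semiregularity and therefore the transitivity of $G$ on $\cT$. The only genuinely computational ingredient is the identification of the lines $t_\lambda$ and of the $G_3$--action on them; once this is in place the characteristic--two squaring bijection makes freeness immediate, so I expect no real obstacle beyond this bookkeeping.
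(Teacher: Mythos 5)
Your proof is correct, and it shares its computational core with the paper's proof --- both arguments reduce to the action of the point stabiliser $G_3$ on the pencil of $q-1$ lines of $\cT$ through a fixed point of $\cH \setminus \ell$, and both ultimately rest on the characteristic-two fact that $a \mapsto a^2$ is a bijection of $\GF(q)$ --- but the way you globalise this local information is genuinely different. The paper argues by two-step transitivity: $G$ is transitive on $\cH \setminus \ell$ (Lemma \ref{point}), and $G_3$, acting on the tangent plane at $U_4 = (0,0,0,1,0)$ as a homology group of order $q-1$ with axis $r_1$ and centre $U_2$, permutes the $q-1$ lines of $\cT$ through that point in a single orbit; composing the two transitivities gives the result without ever invoking the size of $\cT$. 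You instead prove semiregularity --- your computation $t_\lambda \mapsto t_{a^{-2}\lambda}$ together with $a^{-2}\lambda = \lambda \Rightarrow a = 1$ in characteristic two shows line stabilisers are trivial --- and then conclude from the counting coincidence $|\cT| = q^3 - q = |G|$, since every orbit then has full length $|G|$ and the orbits partition $\cT$. Your route requires the exact count $|\cT| = q^3-q$ as an input (it is stated in the paper just before the lemma and is easily verified: $q^2+q$ tangency points, $q-1$ tangents at each), and in exchange it yields slightly more than the statement asks: the action of $G$ on $\cT$ is not merely transitive but regular. Note also that your explicit description of the $G_3$-action on $\{t_\lambda\}$ in fact establishes that this local action is simply transitive, so the paper's transitivity step is already contained in your computation and you could equally have finished by the paper's two-step route.
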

\begin{proof}
Since $G$ acts transitively on the points of $\cH \setminus \{ \ell \}$, we only need to prove that $G$ permutes the $q-1$ tangent lines that are concurrent at some point of $\cH \setminus \ell$. Let $U_4 = (0,0,0,1,0) \in \cH \setminus \ell$. The lines of $\cH$ containing $U_4$ are $r_1 = U_4 U_5 \in \cR_1$ and $r_2 = U_2 U_4 \in \cR_2$. The lines of $\cT$ containing the point $U_4$ lie in the plane $\langle r_1, r_2 \rangle$. The stabilizer of $U_4$ in $G$, i.e. $G_3$, fixes the plane $\langle r_1, r_2 \rangle$ and induces a homology group of order $q-1$ having as axis the line $r_1$ and as center the point $U_2$. Hence, the $q-1$ lines of $\cT$ passing through $U_4$ are permuted in a single orbit under the action of $G_3$.     
\end{proof}

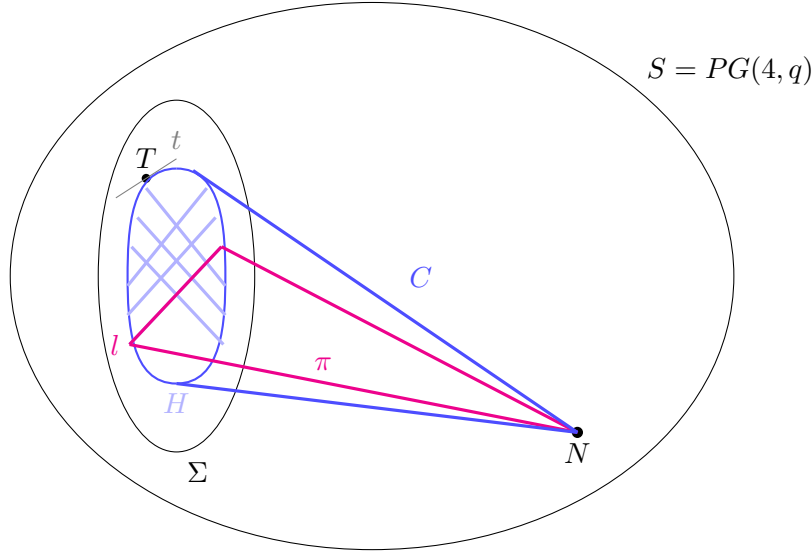
\begin{figure}[h!] 
\centering
\begin{tikzpicture}[scale = 1.3]

\node[fill=black, circle,inner sep=1.5pt] at (4.1,-0.5){};
\node [below, thick] at (4.1,-0.5) {$N$};
\node [above, gray] at (0,2.3) {$t$};
\node [above] at (-0.31,2.1) {$T$};
\node [left, magenta] at (-0.475,0.4) {$l$};
\node[fill=black, circle,inner sep=1.2pt] at (-0.31,2.1){};
\draw (0,1.1) ellipse (0.8 and 1.8);
\draw (2,1.1) ellipse (3.7 and 2.8);

\node [below right, thick] at (0,-0.7) {$\Sigma$};
\node [below, thick, blue!70] at (2.5,1.3) {$C$};
\node [ thick, magenta] at (1.5,0.2) {$\pi$};
\node [above, right, thick] at (4.7,3.2) {$S = PG(4,q)$};
\draw [thick, blue!70] (0,0)  to [out=0,in=-90] (0.5,1.1) to [out=90,in=0] (0,2.2) ; 
\draw [thick, blue!70] (0,0)  to [out=180,in=-90] (-0.5,1.1) to [out=90,in=180] (0,2.2) ; 
\draw [gray](-0.62,1.9)--(-0.31,2.1) -- (0,2.3);
\draw [very thick, blue!30] (0.31,2)   --(-0.5,1);
\draw [very thick, blue!30] (0.4,1.7)   --(-0.5,0.7);
\draw [very thick, blue!30] (0.46,1.4)  coordinate (a)  --(-0.48,0.4) coordinate (b);
\draw [very thick, blue!30] (-0.31,2)  --(0.5,1) ;
\draw [very thick, blue!30] (-0.4,1.7)   --(0.5,0.7);
\draw [very thick, blue!30] (-0.46,1.4)   --(0.48,0.4);
\node [below,  blue!30] at (0,0) {$H$};

\draw [very thick, magenta] (a)   --(b);
\draw [very thick, magenta] (a)   --(4.1,-0.5);
\draw [very thick, magenta] (4.1,-0.5)   --(b);

\draw [very thick, blue!70] (0,0)   --(4.1,-0.5);
\draw [very thick, blue!70] (4.1,-0.5)   --(0.17,2.18);

\end{tikzpicture}
\caption{Construction for $q$ even}\label{qevengroot25}
\end{figure}

Figure \ref{qevengroot25} illustrates the initial elements required in the code construction in PG$(4,q)$, $q$ even. It shows the plane $\pi$, the line $\ell$, and the  nucleus $N$, lying in the plane $\pi$, of all the quadrics $\mathcal{Q}_i$,  $1\leq i\leq q-1$.  It shows the hyperbolic quadric ${\mathcal H}$ contained in the solid $\Sigma$. It shows the cone $\mathcal{C}$ with base ${\mathcal H}$ and with vertex the point $N$. The figure also includes the tangent line $t$ to the hyperbolic quadric ${\mathcal H}$, sharing the tangent point $T$ with ${\mathcal H}$.

\begin{prop}\label{line}
Under the action of $G$, there are $q^2-q$ line--orbits of size $q^3-q$ consisting of mutually disjoint lines. 
\end{prop}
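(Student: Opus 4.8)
The plan is to realize the required orbits as those of lines that are \emph{transversal} to the pencil $\cF$. Concretely, I would hunt for lines $y$ that are disjoint from $\pi\cup\cH$ and that meet each of the $q+1$ members of $\cF$ in exactly one point, and then show that precisely $q^2-q$ orbits of such lines occur, each being a partial line spread.

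First I would record that, by Lemma~\ref{point}, the three kinds of point--orbits of size $q^3-q$ (the sets $\Sigma\setminus\cH$, $\cC\setminus(\pi\cup\cH)$ and $\cQ_i\setminus\cH$, $1\le i\le q-1$) are exactly the $q+1$ members of $\cF$ with the base locus $\cH$ deleted, and that together they partition $\PG(4,q)\setminus(\pi\cup\cH)$, a set of $(q+1)(q^3-q)$ points. If $y$ is transversal as above, then any $g\in G$ fixing $y$ fixes $y\cap\Sigma$, a single point of $\Sigma\setminus\cH$; since $G$ leaves $\Sigma$ invariant and acts regularly on $\Sigma\setminus\cH$, this forces $g=\mathrm{id}$, whence $|y^G|=q^3-q$. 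Moreover $y^G$ meets each of the $q+1$ point--orbits of size $q^3-q$ and, by transitivity of $G$ on each of them, covers each one entirely; as $(q^3-q)(q+1)$ equals the number of points to be covered, the $q^3-q$ lines of $y^G$ must be pairwise disjoint. Thus every such $y$ produces an orbit of exactly the desired shape.

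The heart of the matter is to pin down and count these transversal lines, and here the even characteristic is essential. Writing $y=\langle P,R\rangle$, the restriction of a member $\lambda X_1^2+(X_2X_5+X_3X_4)$ of $\cF$ to $y$ is a binary quadratic form in the line parameters; since $X_1^2$ restricts to a perfect square, every member restricts to a form with the \emph{same} cross--term, namely the polarization $B_0(P,R)$ of $X_2X_5+X_3X_4$. In characteristic two a binary quadratic form has a unique (double) root precisely when its cross--term vanishes, so $y$ meets each member of $\cF$ in exactly one point if and only if $B_0(P,R)=0$, i.e. $y$ is totally isotropic for the alternating form $B_0$ (whose radical is $\langle N\rangle$). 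This perfect--square phenomenon, which fails for $q$ odd, is the step I expect to be the main obstacle.

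Finally I would count. Each transversal line meets $\Sigma$ in a single point of $\Sigma\setminus\cH$, and $G$ is regular on the $q^3-q$ such points, so it suffices to count the transversal lines through one representative, say $P=(0,1,0,0,1)$. Solving $B_0(P,R)=0$ together with the requirement that $y$ avoid $\pi$ yields, after the obvious normalization modulo $\langle P\rangle$, exactly the lines $\langle P,(1,0,r_3,r_4,0)\rangle$ with $r_4\ne 0$, that is $q^2-q$ lines. Hence there are $(q^3-q)(q^2-q)$ transversal lines altogether, and since every orbit has size $q^3-q$ we obtain exactly $q^2-q$ orbits, each a partial line spread. To see these are \emph{all} orbits of $q^3-q$ mutually disjoint lines, one notes that such an orbit covers a $G$--invariant set of $(q^3-q)(q+1)$ points; comparing with the orbit sizes of Lemma~\ref{point}, the only such union is $\PG(4,q)\setminus(\pi\cup\cH)$, which forces the lines to be transversal in the above sense.
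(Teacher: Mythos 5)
Your proof is correct in substance, but it takes a genuinely different route from the paper. The paper's argument is purely synthetic: it takes a tangent line $t \in \cT$ with $T = t \cap \cH$, forms the plane $\gamma = \langle N, t \rangle$, and uses the nucleus of the quadrics $\cQ_i$ to show that $\gamma$ meets $\Sigma$ in $t$, meets $\cC$ in $NT$, and meets each $\cQ_i$ in a line through $T$; consequently the $q^2-q$ lines of $\gamma$ through neither $T$ nor $N$ are transversal to the pencil, and since coplanar lines pairwise meet, these lie in $q^2-q$ pairwise distinct orbits, each a partial spread by the covering count. You instead characterize the transversal lines algebraically: in characteristic two all members of $\cF$ polarize to the same alternating form $B_0$ (with radical $\langle N \rangle$), so a line off $\pi \cup \cH$ is transversal exactly when it is totally isotropic for $B_0$; you then count such lines through one point of $\Sigma \setminus \cH$, use the regularity of $G$ on $\Sigma \setminus \cH$ (Lemma \ref{point}) for both the stabilizer computation and the transfer of the count, and divide by the orbit size. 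What your approach buys is a clean invariant criterion and an \emph{exact} count of the transversal-line orbits; note that the paper's plane $\langle N, t \rangle$ is precisely a totally isotropic plane for $B_0$, so your criterion subsumes its construction. What the paper's approach buys is that the explicit plane $\gamma$ and its intersection pattern with the pencil are reused later (Remark \ref{description} and Proposition \ref{pencil}), whereas your count is a one-off.

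Two small repairs are needed. First, in the count through $P = (0,1,0,0,1)$, the conditions ``$B_0(P,R)=0$ and $y$ avoids $\pi$'' alone also admit lines contained in $\Sigma$ (isotropic lines of $\cW(3,q)$ through $P$ missing $\ell$); these must be discarded, which is easy since every totally isotropic line of $\Sigma$ is a line of $\cW(3,q)$ and hence meets $\cH$, violating your disjointness requirement -- your normalization to first coordinate $1$ silently does this, but it should be said. Second, your closing claim that these are \emph{all} orbits of $q^3-q$ mutually disjoint lines is not needed for the proposition (the paper, too, only exhibits $q^2-q$ such orbits), and your justification by comparing orbit sizes is inconclusive for $q=2$, where $(q+1-k)(q^3-q)$ can also be realized by unions of the small orbits (e.g.\ $6 = 3+3$); either drop that sentence or restrict it to $q \ge 4$.
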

\begin{proof}
First of all notice that if $r$ is a line meeting each of the sets $\Sigma \setminus \cH, \cC \setminus (\pi \cup \cH), \cQ_i \setminus \cH$, $1 \le i \le q-1$, in exactly one point, then $r^G$ is a set consisting of $q^3-q$ lines covering $(q+1)(q^3-q)$ points and hence forming a partial spread of the ambient projective space. In the following we prove that there is a plane containing $q^2-q$ lines having the desired property. Since two lines in a projective plane have a point in common, no two of them are in the same  $G$--orbit. Hence, they are representatives of $q^2-q$ distinct line--orbits of size $q^3-q$ forming a partial spread. Let $t$ be a line of $\cT$, where $T = t \cap \cH$, and consider the plane $\gamma = \langle N, t \rangle$. It follows that $\gamma$ meets $\Sigma$ and $\cC$ in the line $t$ and $N T$, respectively. On the other hand, a solid $\Gamma$ containing the plane $\gamma$ has to meet the parabolic quadric $\cQ_i$ in a quadratic cone, since $N \in \Gamma$ and $N$ is the nucleus of $\cQ_i$. Notice that every line passing through $N$ meets $\cQ_i$ in a point. Hence, it turns out that two possibilities occur: either $\gamma \cap \cQ_i$ is a conic, whose nucleus is $N$, or $\gamma \cap \cQ_i$ is a line passing through $T$. Since $t$ and $N T$ are two lines that are tangent to $\cQ_i$ at the point $T \in \cQ_i$, the former case cannot occur. Hence, $\gamma$ meets $\cQ_i$ in a line passing through $T$, $1 \le i \le q-1$. It follows that every line of $\gamma$ containing neither $T$ nor $N$ is a line meeting each of the sets $\Sigma \setminus \cH, \cC \setminus (\pi \cup \cH), \cQ_i$, $1 \le i \le q-1$, in exactly one point.  
\end{proof}

In order to prove the next result, we embed $S = \PG(4,q)$ as a hyperplane section of $\PG(5,q)$, the five--dimensional projective space over $\GF(q)$ equipped with homogeneous coordinates $(X_1, X_2, X_3, X_4, X_5, X_6)$. Let $\cK$ be the Klein quadric of $\PG(5,q)$ defined by the equation $X_1 X_6 + X_2 X_5+ X_3 X_4 = 0$. Notice that $\cK \cap S = \cC$. Also, the group $G$ is extended canonically to a subgroup $\bar G$ of the stabilizer of $\cK$ in $\PGL(6,q)$. In particular, ${\bar G} = \{ {\bar M}_{a,b,c,d} \;\; | \;\;  a \in \GF(q) \setminus \{ 0 \}, b,c,d \in \GF(q), c^2+cd+\alpha d^2 = 1 \}$, where
$$
{\bar M}_{a,b,c,d} = \left(
\begin{array}{cccccc}
1 & 0 & 0 & 0 & 0 & 0\\
0 & ac & \alpha ad & bc & \alpha bd & 0 \\
0 & ad & a(c+d) & bd & b(c+d) & 0 \\
0 & 0 & 0 & a^{-1}c & \alpha a^{-1}d & 0 \\
0 & 0 & 0 & a^{-1}d & a^{-1}(c+d) & 0 \\
0 & 0 & 0 & 0 & 0 & 1
\end{array}
\right) .
$$
We will use the polarity $\perp$ of $\PG(5,q)$ associated to the Klein quadric $\cK$, to show that the existence of line--orbits consisting of mutually disjoint lines implies the existence of plane--orbits consisting of planes mutually intersecting in one point. 

\begin{prop}\label{plane}
Under the action of $G$, there are $q^2-q$ plane--orbits of size $q^3-q$ consisting of planes mutually intersecting in one point. 
\end{prop}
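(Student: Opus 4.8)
The plan is to use the polarity $\perp$ to set up a $G$-equivariant duality of $S=\PG(4,q)$ sending the partial line spreads of Proposition \ref{line} to orbits of planes. The first point I would stress is that the obvious map $r\mapsto r^\perp\cap S$ does \emph{not} work: $S$ is the tangent hyperplane of $\cK$ at its pole $N$ (so that $\cK\cap S=\cC$ is a cone with vertex $N$), and every line $r$ in an orbit of Proposition \ref{line} lies in a plane through $N$; hence $r^\perp\cap S=\langle N,r\rangle^\perp$ depends only on that plane, and this degenerate map collapses all the line-orbits onto a single plane-orbit. The remedy is to polarize with respect to a \emph{non-tangent} section of $\cK$. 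Each hyperplane $H_i\colon X_6=\lambda_iX_1$ of $\PG(5,q)$ is fixed by $\bar G$ (which fixes $X_1$ and $X_6$) and cuts $\cK$ in a parabolic quadric equivalent to the member $\cQ_i$ of the pencil $\cF$; thus $\perp$ induces on $H_i\cong\PG(4,q)$ the non-degenerate polarity $\sigma_i$ associated with $\cQ_i$. Equivalently, and more directly, I would simply work inside $S$ with the polarity $\sigma_i$ of the parabolic quadric $\cQ_i$.

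Next I would record that $G$ fixes $\cQ_i$ setwise --- Lemma \ref{point}(g) already exhibits $\cQ_i\setminus\cH$ as a single $G$-orbit --- so $\sigma_i$ commutes with $G$, i.e. $\sigma_i(U^g)=\sigma_i(U)^g$ for every subspace $U$ and every $g\in G$. Consequently $\sigma_i$ maps line-orbits to plane-orbits and, being a bijection of the subspace lattice, sends distinct orbits to distinct orbits. Fixing one of the $q^2-q$ line-orbits $\cL'=r^G$ of Proposition \ref{line}, I set $\cP'=\sigma_i(\cL')=\sigma_i(r)^G$. For two distinct lines $r_1,r_2\in\cL'$ the partial-spread property gives $r_1\cap r_2=\emptyset$, so $\langle r_1,r_2\rangle$ is a solid; applying $\sigma_i$ and using that a polarity of $\PG(4,q)$ reverses inclusion and sends a $k$-space to a $(3-k)$-space, I obtain $\sigma_i(r_1)\cap\sigma_i(r_2)=\sigma_i(\langle r_1,r_2\rangle)$, the image of a solid, which is a single point. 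Hence the planes of $\cP'$ pairwise meet in exactly one point.

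To finish, I would check the orbit sizes and the count. If $g\in G$ fixes $\sigma_i(r)$, then by equivariance and $\sigma_i^2=\mathrm{id}$ it fixes $r$; since the lines of $\cL'$ have trivial stabilizer (its orbit has size $q^3-q=|G|$), this forces $g=\mathrm{id}$, so $|\cP'|=q^3-q$. Letting $\cL'$ range over the $q^2-q$ line-orbits then yields $q^2-q$ distinct plane-orbits of size $q^3-q$ whose members pairwise intersect in one point. The only genuine obstacle is the first step: because $S$ meets $\cK$ in a cone, the restriction of $\perp$ to $S$ is degenerate and useless, and one must instead use the non-degenerate, $G$-invariant polarity coming from a non-tangent quadric $\cQ_i$; once that choice is made, the ``exactly one point'' intersection is immediate from the duality dictionary and everything else is routine bookkeeping.
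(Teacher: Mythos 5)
Your diagnosis of the obstacle is exactly right and matches the paper's: since every good line $r$ lies in a plane through $N$ (the orbit representatives all sit in $\gamma=\langle N,t\rangle$, and $G$ fixes $N$) and $S=N^\perp$, the naive map $r\mapsto r^\perp\cap S=\langle N,r\rangle^\perp$ collapses all $q^2-q$ good line--orbits onto a single plane--orbit. But your proposed remedy fails for a reason specific to this section: here $q=2^h$. In even characteristic the bilinear form attached to any quadratic form is alternating, so the polarity $\perp$ of the Klein quadric is a \emph{symplectic} polarity of $\PG(5,q)$, and its restriction to \emph{every} hyperplane (tangent or not) is degenerate, with radical the pole of that hyperplane. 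For $H_i\colon X_6=\lambda_i X_1$ this pole is $(1,0,0,0,0,\lambda_i)$, which is precisely the nucleus of the parabolic section $\cK\cap H_i$. Equivalently, inside $S$ the form of $\cQ_i$ is an alternating form on a $5$--dimensional vector space, hence degenerate with radical the nucleus $N$ --- this is exactly why all the $\cQ_i$ share the nucleus $N$, as the paper notes, and why $Q(4,q)\cong \cW(3,q)$ for $q$ even. So the non-degenerate polarity $\sigma_i$ you invoke does not exist; the map it would induce satisfies $\sigma_i(r)=\sigma_i(\langle N,r\rangle)$ for every line $r$, and it collapses the good line--orbits exactly like the map you rejected. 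Your argument would be sound in odd characteristic, but in the even case the key object is missing.

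The paper's way around this is to stay in $\PG(5,q)$, where $\perp$ is non-degenerate, and never restrict it to a hyperplane. Since $\bar G$ acts on the \emph{other} tangent hyperplane $S'=N'^\perp$, with $N'=(0,0,0,0,0,1)$, exactly as $G$ acts on $S$, Proposition \ref{line} produces $q^2-q$ good line--orbits inside $S'$. For $r$ in such an orbit $\cR$ one forms the plane $\sigma=\langle N,r\rangle$; two distinct planes $\langle N,r_1\rangle,\langle N,r_2\rangle$ of $\sigma^{\bar G}$ meet exactly in $N$, because a further common point $P$ would give a line $NP$ meeting $S'$ in a point of $r_1\cap r_2=\emptyset$. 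Applying $\perp$: each polar plane lies in $N^\perp=S$, and two of them span $(x_1\cap x_2)^\perp=N^\perp=S$, so two planes of the orbit spanning a $4$--space meet in exactly one point. Since $r$ is recoverable from $\langle N,r\rangle^\perp$ via $\langle N,r\rangle\cap S'=r$, distinct line--orbits in $S'$ yield distinct plane--orbits in $S$, giving all $q^2-q$ of them. If you want to salvage your write-up along your own lines, this is the substitution to make: replace the non-existent $\sigma_i$ by the genuine polarity $\perp$ of $\PG(5,q)$ applied to the planes $\langle N,r\rangle$, with $r$ taken in $S'$ rather than in $S$.
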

\begin{proof}
Let $S'$ be the hyperplane of $\PG(5,q)$ with equation $X_1 = 0$. Then, $S \cap S' = \Sigma$ and the Klein quadric $\cK$ meets $S'$ in a cone, say $\cC'$, having as vertex the point $N' = (0,0,0,0,0,1)$ and as base $\cH$. The group $\bar G$ acts identically on the line $N N'$, hence every hyperplane through $\Sigma = (N N')^\perp$ is fixed by $\bar G$. Also, it is clear that the action of $\bar G$ on $S'$ is the same as the action of $G$ on $S$. Therefore, by Proposition \ref{line}, under the action of $\bar G$, there are $q^2-q$ line--orbits of size $q^3-q$ consisting of mutually disjoint lines of $S'$. Let $\cR$ be one of these $q^2-q$ orbits. Then, each of the lines in $\cR$ is disjoint from $\pi' = \langle N', \ell \rangle$, meets $\Sigma$ in a point that is not in $\cH$, and has only one point in common with $\cC'$. Consider the plane $\sigma = \langle N, r \rangle$, where $r$ is a line of $\cR$. Consider the set $\cS = \{x^\perp \;\; | \;\; x \in \sigma^{\bar G}\}$. Since every plane in $\sigma^{\bar G}$ contains the point $N$, it follows that every plane in $\cS$ lies in $S$. On the other hand, since two distinct planes in $\sigma^{\bar G}$ have exactly the point $N$ in common, it follows that two distinct planes in $\cS$ generate the hyperplane $S$ and hence they meet exactly in a point. 
\end{proof}

With the same notation as in Proposition \ref{plane}, we want to provide an alternative description of the $q^2-q$ plane--orbits of size $q^3-q$ consisting of planes mutually intersecting in one point. Firstly, we prove the following lemma.

	\begin{lemma}\label{conic}
$\sigma \cap \cK$ is a conic
	\end{lemma}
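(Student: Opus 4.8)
The plan is to identify two explicit points of the section $\sigma\cap\cK$ and then show the section contains no line, so that every degenerate possibility is excluded and a non-degenerate conic is forced. Since $r\subset S'$ and $\cK\cap S'=\cC'$, the line $r$ meets $\cK$ precisely where it meets $\cC'$, which by Proposition~\ref{plane} is a single point; call it $Q$. In particular $r\not\subseteq\cK$, hence $\sigma\not\subseteq\cK$, so the restriction of the quadratic form of $\cK$ to the plane $\sigma$ is nonzero and $\sigma\cap\cK$ is a genuine conic. Moreover $N=(1,0,0,0,0,0)$ lies on $\cK$ and on $\sigma$, so $N,Q\in\sigma\cap\cK$; as $N\notin S'$ while $Q\in S'$, these two points are distinct, and thus $\sigma\cap\cK$ is neither empty nor a single point.

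The main obstacle, and the only place where the special geometry of the lines of $\cR$ is used, is to prove that the line $NQ$ is not contained in $\cK$. Write $Q=(0,q_2,q_3,q_4,q_5,q_6)$. Since $N,Q\in\cK$, the line $NQ$ lies on $\cK$ if and only if the polar form $B$ of $\cK$ satisfies $B(N,Q)=0$, and a direct evaluation gives $B(N,Q)=q_6$. So it suffices to show $q_6\neq 0$, equivalently $Q\notin\Sigma$. I would argue by contradiction, invoking the two properties of $r$ recorded in Proposition~\ref{plane}: $r$ meets $\Sigma$ in a single point $R\notin\cH$, while from the defining equations $\cC'\cap\Sigma=\cH$. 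If $q_6$ were zero, then $Q\in\{X_1=X_6=0\}=\Sigma$, and since $Q\in\cC'$ this would place $Q$ in $\cC'\cap\Sigma=\cH$; but $Q\in r\cap\Sigma=\{R\}$ with $R\notin\cH$, a contradiction. Hence $q_6\neq 0$ and $NQ\not\subseteq\cK$.

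Finally I would rule out every degenerate conic by showing $\sigma\cap\cK$ contains no line. Suppose it contained a line $m\subseteq\cK$. Since $r\not\subseteq\cK$ we have $r\neq m$, so $r$ meets $m$ in a single point lying in $r\cap\cK=\{Q\}$, whence $Q\in m$. If also $N\in m$, then $NQ\subseteq m\subseteq\cK$, contradicting the previous step; if instead $N\notin m$, then the conic $\sigma\cap\cK$ contains the line $m$ together with the extra point $N$, so it is a pair of lines $m\cup m'$ with $N\in m'$, and the same reasoning applied to $m'$ forces $Q\in m'$, again giving $NQ\subseteq m'\subseteq\cK$. Either way we reach a contradiction, so $\sigma\cap\cK$ contains no line. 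A conic with at least two points and containing no line is non-degenerate, which proves the claim.
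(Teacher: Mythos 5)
Your proof is correct, and it follows the same skeleton as the paper's: exhibit the two distinct points $N$ and $Q = r \cap \cC'$ of $\sigma \cap \cK$, discard the whole-plane case, and then show that $\sigma \cap \cK$ contains no line of $\cK$, which forces a nondegenerate conic. The difference is in how lines are excluded. The paper argues synthetically: every line of $\cK$ through $N$ lies in the tangent hyperplane $N^\perp = S$, and $\sigma \cap S$ is a line tangent to $\cK$ at $N$, so no line of the section can pass through $N$ --- this kills the line and line-pair cases in one stroke, since $N$ lies on the section. You instead pivot on $Q$: any line of $\cK$ inside $\sigma$ must meet $r$ in a point of $r \cap \cK = \{Q\}$, and your case analysis reduces everything to the single line $NQ$, which you exclude by the coordinate computation $B(N,Q) = q_6 \neq 0$. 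These are two phrasings of the same geometric fact: $B(N,Q) \neq 0$ is precisely the statement $Q \notin N^\perp = S$, i.e. $Q \notin \Sigma$, and both proofs ultimately rest on the same input from Proposition \ref{plane}, namely that $r$ meets $\Sigma$ in a point off $\cH$ (equivalently, that $r$ is not tangent to $\cK$ inside $\Sigma$). Your version is more elementary and self-contained --- it never needs the fact that the lines of $\cK$ through a point of $\cK$ sweep out its tangent hyperplane --- at the cost of a longer case analysis; the paper's is shorter once that polarity fact is granted. One cosmetic remark: the properties of $r$ you invoke ($|r \cap \cC'| = 1$ and $r \cap \Sigma \notin \cH$) are established inside the proof of Proposition \ref{plane} (drawing on Proposition \ref{line}) rather than in its statement, so a precise write-up should point there.
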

	\begin{proof}
Notice that $N^\perp = S$, $N'^\perp = S'$, $\sigma \cap S$ is a line tangent to $\cK$ at the point $N$ and $\sigma$ contains at least two points of $\cK$, namely the points $N$ and $r \cap \cC'$. It follows that $\sigma \cap \cK$ is either a line, or two lines or a conic. If one of the first two cases occurs, then $\sigma$ would contain a line of $\cK$ through $N$, but all the lines of $\cK$ through $N$ lie in $S$, a contradiction. Hence, $\sigma \cap \cK$ is a conic.
	\end{proof}

\begin{remark}\label{description} 
From Lemma \ref{conic}, it follows that every plane in $\cS$ meets $\cC$ in a conic. Indeed, $\sigma \cap \cK$ is a conic and $\bar G$ stabilizes $\cK$. Consider the solid $\Sigma' = \langle \sigma, N' \rangle$. Since $\langle \Sigma, N, N' \rangle$ is the whole ambient projective space $\PG(5,q)$, and $\Sigma'$ contains both $N$, $N'$, we have that $\Sigma \cap \Sigma'$ is a line. From the proof of Proposition \ref{line}, if $r$ is the line of $\cR$ such that $\sigma = \langle N, r \rangle$, then the plane $\langle N', r \rangle$ meets $\Sigma$ in a line $t$ that is tangent to $\cH$ at the point $U$. Hence, $\Sigma \cap \Sigma' = t$. Also, since $\Sigma^\perp \subset \Sigma'$, we have that $\Sigma'^\perp \subset \Sigma$ and, hence, $\Sigma'^\perp = \Sigma'^\perp\cap \Sigma$. Moreover, $\Sigma' \cap \Sigma = (\Sigma' \cap \Sigma)^{\perp_{|\Sigma}} = t^{\perp_{|\Sigma}} = t$. Notice that, since $q$ is even, $\perp_{|\Sigma}$ coincides with the polarity of $\Sigma$ associated with $\cW(3,q)$. We have that $\sigma^\perp \cap \Sigma = (\sigma \cup \Sigma^\perp)^\perp = \Sigma'^\perp = t$ and every plane in $\cS$ meets $\Sigma$ in a line of $\cT$.       
Let $U = t \cap (\cH \setminus \ell)$. Let $R$ be the point in common between the unique line in $\cR_2$ containing $U$ and $\ell$. Let $U' = \sigma^\perp \cap \pi$. Since $\sigma^\perp \cap \Sigma = t$, it follows that $U' \notin \ell$. On the other hand, if $U' \in R N$, then the line $U U'$ would be contained in $\sigma^\perp \cap \cC$, contradicting the fact that $\sigma^\perp \cap \cC$ is a conic. 
In other words, let $t$ be a line of $\cT$ and let $U = t \cap (\cH \setminus \ell)$. Let $R$ be the point in common between the unique line in $\cR_2$ containing $U$ and $\ell$. Let $U'$ be a point of $\pi \setminus (\ell \cup R N)$. Let $\gamma$ be the plane containing the line $t$ and the point $U'$. Then $\gamma^G$ consists of $q^3-q$ planes mutually intersecting in one point. There are $q^2-q$ choices for the point $U'$, and taking into account Lemma \ref{tan}, each of these points gives rise to a representative for a plane--orbit of size $q^3-q$ consisting of planes mutually intersecting in one point.
\end{remark}

\begin{figure}[h!] 
\centering
\begin{tikzpicture}


\draw [thick, cyan] (-5.5,3.5)  to [out=290,in=70] (-5.5,-3.5) to [out=110,in=250] (-5.5,3.5) ; 
\draw [thick, cyan] (5.5,3.5)  to [out=290,in=70] (5.5,-3.5) to [out=110,in=250] (5.5,3.5) ; 

\draw[thick, cyan] (-5.5,-3.5) .. controls(0,-2) .. (5.5,-3.5) ;
\draw[thick, cyan] (-5.5,3.5) .. controls(0,2) .. (5.5,3.5) ;
\node [left, thick, cyan] at (-5.5,3.5) {$\mathcal{K}=Q^+(5,q)$};

\draw [rotate=-65.8](1,1) ellipse (4 and 1.5);
\draw [rotate=65.8](-1,1) ellipse (4 and 1.5);
\node [left, thick] at (3,-4.5) {$S$};
\node [right, thick] at (-3,-4.5) {$S'$};

\node[fill=magenta,circle,inner sep=1.5pt] at (3,-2.5){};
\node[fill=magenta,circle,inner sep=1.5pt] at (-3,-2.5){};
\node [left] at (-3,-2.5) {$N'$};
\node [right] at (3,-2.5) {$N$};

\draw [thick, blue] (0,0)  to [out=0,in=-90] (0.5,1.1) to [out=90,in=0] (0,2.2) ; 
\draw [thick, blue] (0,0)  to [out=180,in=-90] (-0.5,1.1) to [out=90,in=180] (0,2.2) ; 
\draw [very thick, blue!30] (0.31,2)   --(-0.5,1);
\draw [very thick, blue!30] (0.4,1.7)   --(-0.5,0.7);
\draw [very thick, blue!30] (0.46,1.4)   --(-0.48,0.4);
\draw [very thick, blue!30] (-0.31,2)   --(0.5,1);
\draw [very thick, blue!30] (-0.4,1.7)   --(0.5,0.7);
\draw [very thick, blue!30] (-0.46,1.4)   --(0.48,0.4);
\node [right, blue!40] at (0.1,2.2) {$H$};

\draw [very thick, blue!70] (-3,-2.5)  --(0,0) ;
\draw [very thick, blue!70] (3,-2.5)  --(0,0) ;
\draw [very thick, blue!70] (0.31,2)  --(3,-2.5) ;
\draw [very thick, blue!70] (-0.31,2)  --(-3,-2.5) ;
\node [ blue] at (1.9,0) {$\mathcal{C}$};
\node [ blue] at (-1.9,0) {$\mathcal{C'}$};

\draw [very thick, black!60!green] (-1.2,-0.5) coordinate (r1)  --(0,-0.5) coordinate (r2);
\node[fill=black!60!green,circle,inner sep=1.2pt] at (r1){};
\node[fill=black!60!green,circle,inner sep=1.2pt] at (r2){};
\draw [very thick, black!60!green] (3,-2.5)   --(r2);
\draw [very thick, black!60!green] (3,-2.5)   --(r1);
\node [black!60!green] at (0.45,-1.1) {$\sigma$};
\node [black!60!green, above left] at (-0.6,-0.5) {$r$};
\end{tikzpicture}
\caption{Construction in  PG$(5,q)$, $q$ even}\label{qevengroot}
\end{figure}
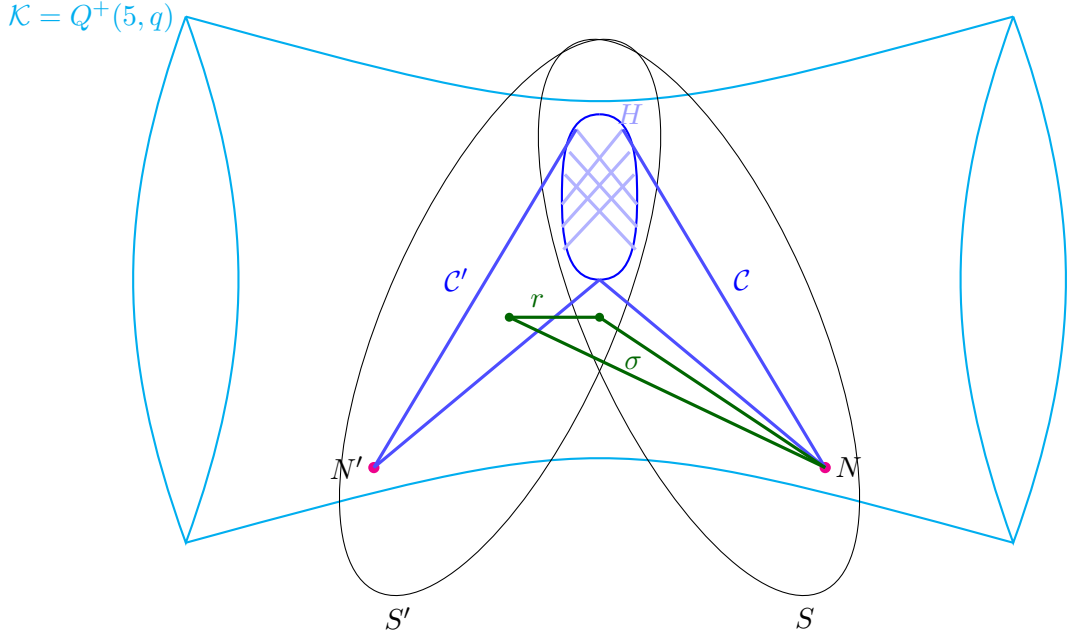

Figure \ref{qevengroot} aims at giving a global overview of the setting in PG$(5,q)$, $q$ even, regarding the construction of the subspace code. We see the Klein quadric ${\mathcal K}$, and the two hyperplanes $S$ and $S'$. We see the 3-dimensional hyperbolic quadric ${\mathcal H}$, contained in the intersection of $S$ and $S'$. We see the two cones ${\mathcal C}$ and ${\mathcal C'}$ with base ${\mathcal H}$ and with respective vertices $N$ and $N'$. Figure \ref{qevengroot}
  also illustrates the plane $\sigma =\langle r,N\rangle$. \\

With the same notation as in Remark \ref{description}, let $\gamma$ be the plane $\langle t, U' \rangle$ such that $\gamma^G$ consists of $q^3-q$ planes mutually intersecting in one point. 

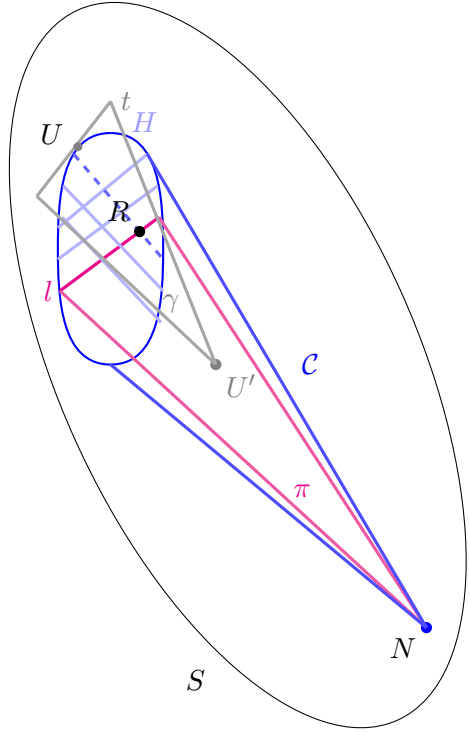
\begin{figure}[h!] 
\centering
\begin{tikzpicture}[scale = 1.4]


\draw [rotate=-65.8](0.5,1.1) ellipse (3.7 and 1.7);
\node [left, thick] at (1,-3) {$S$};

\node[fill=blue,circle,inner sep=1.5pt] at (3,-2.5){};
\node [below left] at (3,-2.5) {$N$};

\draw [thick, blue] (0,0)  to [out=0,in=-90] (0.5,1.1) to [out=90,in=0] (0,2.2) ; 
\draw [thick, blue] (0,0)  to [out=180,in=-90] (-0.5,1.1) to [out=90,in=180] (0,2.2) ; 
\draw [very thick, blue!30] (0.35,2)   --(-0.5,1.3);
\draw [very thick, blue!30] (0.45,1.7)   --(-0.5,1);
\draw [very thick, magenta] (0.46,1.4) coordinate (l1)   --(-0.48,0.7) coordinate (l2);
\node [left, magenta] at (-0.45,0.7) {$l$};
\node [left, magenta] at (2,-1.2) {$\pi$};
\draw [very thick, blue!60, style=dashed] (-0.35,2) coordinate (c1)   --(0.5,1) coordinate (c2);
\draw [very thick, blue!30] (-0.45,1.7)   --(0.5,0.7);
\draw [very thick, blue!30] (-0.46,1.4)   --(0.48,0.4);
\node [right, blue!40] at (0.1,2.3) {$H$};

\draw [very thick, magenta!80] (l1)   --(3,-2.5);
\draw [very thick, magenta!80] (l2)   --(3,-2.5);

\draw [very thick, blue!70] (3,-2.5)  --(0,0) ;
\draw [very thick, blue!70] (0.35,2)  --(3,-2.5) ;
\node [ blue] at (1.9,0) {$\mathcal{C}$};

\draw [very thick, gray!70] (-0.7,1.6) coordinate (t1)  --(0,2.5) coordinate (t2);
\node [ gray, right] at (t2) {$t$};
\node[fill=gray,circle,inner sep=1.2pt] at (-0.31,2.07){};
\node [ above left] at (-0.35,2) {$U$};
\node[fill=gray,circle,inner sep=1.5pt] at (1,0){};
\node [ gray, below right] at (1,0) {$U'$};
\draw [very thick, gray!70] (t1)  --(1,0) ;
\draw [very thick, gray!70] (t2)  --(1,0) ;
\node [ gray, above left] at (0.75,0.4) {$\gamma$};

\node[fill=black,circle,inner sep=1.5pt] at (intersection of l1--l2 and c1--c2) {};
\node[above left] at (intersection of l1--l2 and c1--c2)  {$R$};
\end{tikzpicture}

\caption{Construction in the hyperplane $S$ of PG$(5,q)$,  $q$ even} \label{qevenkleinx}
\end{figure}

Figure \ref{qevenkleinx} illustrates the construction in the hyperplane $S$. \\

We say that $\gamma^G$ is a {\em good} plane--orbit. Also, we say that a line--orbit of size $q^3-q$ consisting of mutually disjoint lines as described in Proposition \ref{line} is a {\em good} line--orbit.   

	\begin{prop}\label{pencil}
A plane in a good plane--orbit contains exactly $q-1$ lines of distinct good line--orbits.
	\end{prop}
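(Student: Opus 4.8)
The plan is to count the lines of a good plane $\gamma = \langle t, U' \rangle$ that are themselves good lines. Since any two lines of $\gamma$ meet, while the members of a good line--orbit are mutually disjoint, no two distinct lines of $\gamma$ can lie in a common good line--orbit. Hence the number of distinct good line--orbits meeting $\gamma$ in a line equals the number of good lines contained in $\gamma$, and it suffices to show that $\gamma$ contains exactly $q-1$ good lines.

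The key is to exploit that $q$ is even. The quadric $\Sigma$ has equation $X_1^2 = 0$, whose polar form vanishes identically; consequently every member $\mu X_1^2 + \nu(X_2X_5+X_3X_4)$ of the pencil $\cF$ has one and the same polar form, namely the symplectic form $B$ of $\cC$, whose radical is $\langle N \rangle$. As $N \notin \gamma$, the restriction of $B$ to $\gamma$ is a symplectic form with a one--dimensional radical, i.e.\ a single point $n \in \gamma$. Since $t \in \cT$ is a line of $\cW(3,q)$ it is totally isotropic for $B$, hence passes through $n$; and as $t \cap \cH = \{U\}$ with $n$ lying off the conic $\gamma \cap \cC$, we get $n \in t \setminus \{U\} \subset \Sigma \setminus \cH$. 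Because $\Sigma$ and $\cQ_i$ (resp.\ $\cC$) meet only in the base locus $\cH$, the point $n$ lies on no member of $\cF$ other than $\Sigma$; therefore each section $\gamma \cap \cQ_i$ and $\gamma \cap \cC$ is a nondegenerate conic with nucleus $n$. By Proposition \ref{line}, a line of $\gamma$ disjoint from $\cH$ is good precisely when it meets every member of $\cF$ in a single point, and in characteristic two a line meets a conic in exactly one point if and only if it passes through the nucleus. Thus every good line of $\gamma$ is one of the $q+1$ lines through $n$, and it remains to discard those failing the remaining conditions.

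To finish I would locate the two forbidden lines. Since $\gamma \cap \Sigma = t$ and $\cH \subset \Sigma$, one has $\gamma \cap \cH = t \cap \cH = \{U\}$, so among the lines through $n$ only $t = \langle n, U \rangle$ meets $\cH$, and this line is not good. Likewise $\pi \subset \cC$ forces $\gamma \cap \pi = \{U'\}$, and the unique line through $n$ whose intersection with the conic $\gamma \cap \cC$ is the point $U' \in \pi$, namely the tangent $\langle n, U' \rangle$, meets $\cC$ inside $\pi$ and so is not good either. These two lines are distinct, as $U \in \cH \subset \Sigma$ while $U' \notin \Sigma$. Each of the remaining $q+1-2 = q-1$ lines $r$ through $n$ is disjoint from $\cH$ and is tangent to each conic $\gamma \cap \cQ_i$ and to $\gamma \cap \cC$ at a point distinct from $U$ and $U'$; hence $r$ meets each of $\Sigma \setminus \cH$, $\cC \setminus (\pi \cup \cH)$ and $\cQ_i \setminus \cH$ in exactly one point, so by Proposition \ref{line} it is good. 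Therefore $\gamma$ contains exactly $q-1$ good lines, lying in $q-1$ distinct good line--orbits.

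The main obstacle is the clean equivalence ``good $\Leftrightarrow$ through the nucleus $n$''. It rests entirely on the characteristic--two phenomenon that all quadrics of $\cF$ share the single polar form $B$, so that their plane sections in $\gamma$ are concentric conics with common nucleus $n$. Verifying that $n$ lies off every one of these conics (equivalently $n \in \Sigma \setminus \cH$, using $\cQ_i \cap \Sigma = \cH$) is what guarantees their nondegeneracy and legitimizes the tangent--line/nucleus dictionary; once this is in place the count $q+1-2$ is immediate.
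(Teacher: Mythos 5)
Your proof is correct and takes essentially the same route as the paper: both arguments reduce to showing that the conics cut out on $\gamma$ by the pencil $\cF$ share a common nucleus lying on $t \setminus \{U\}$ (your $n$, the paper's $C$), after which the good lines of $\gamma$ are exactly the $q+1-2 = q-1$ lines through that nucleus avoiding $U$ and $U'$, and concurrence of lines in $\gamma$ forces them into distinct good orbits. The only real difference is that the paper obtains the common nucleus by citing the classification of pencils of conics in even characteristic (\cite[Table 7.7]{H}), whereas you derive it directly from the characteristic--two fact that $X_1^2$ has identically zero polar form, so all members of $\cF$ other than $\Sigma$ share the symplectic form $B$, whose radical point on $\gamma$ is the common nucleus --- a nice self-contained substitute for the citation.
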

	\begin{proof}
We need to show that there are exactly $q-1$ lines of $\gamma$ meeting each of the sets $\Sigma \setminus \cH, \cC \setminus (\pi \cup \cH), \cQ_i$, $1 \le i \le q-1$, in exactly one point. Notice that $\cF$ induces a pencil of plane quadrics $\cF'$ on $\gamma$ containing the line $t = \gamma \cap \Sigma$ and the conic $\gamma \cap \cC$, see Remark \ref{description}. Furthermore, the base locus of $\cF'$ is the point $U = \gamma \cap \cH$. Notice that $\gamma \cap \cQ_i$ cannot contain a line. Indeed, such a line would contain the point $U$ and hence would meet the conic $\gamma \cap \cC$ in a further point, which is a contradiction. Also, $\gamma \cap \cQ_i$ is a conic, $1 \le i \le q-1$, and $\bigcup_i (\gamma \cap \cQ_i)$ consists of $q^2-q+1$ points of $\gamma$. Each of the conics in $\cF'$ admits as a nucleus the same point, say $C$, where $C$ is a point of $t$ distinct from $U$, see \cite[Table 7.7]{H}. It turns out that the lines of $\gamma$ meeting each of the sets $\Sigma \setminus \cH, \cC \setminus (\pi \cup \cH), \cQ_i$, $1 \le i \le q-1$, in exactly one point are the $q-1$ lines passing through $C$ and not containing $U$ and $U'$. 
	\end{proof}

We are ready to prove our main result of this section.

\begin{theorem}\label{code1}
There exists a set $\cL$ consisting of $q^3+1$ pairwise skew lines and a set $\cP$ consisting of $q^3+1$ planes mutually intersecting in exactly a point, such that no line of $\cL$ is contained in a plane of $\cP$. 
\end{theorem}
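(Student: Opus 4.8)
The plan is to enlarge a good line--orbit and a good plane--orbit, each of size $q^3-q$, by the $q+1$ extra lines and planes needed to reach size $q^3+1$, using the two reguli $\cR_1,\cR_2$ of the hyperbolic quadric $\cH$. I would fix a good plane--orbit $\cP'=\gamma^{G}$, with $\gamma=\langle t,U'\rangle$ as in Remark \ref{description}. By Proposition \ref{pencil} the representative $\gamma$ contains exactly $q-1$ good lines, lying in $q-1$ distinct good line--orbits, whereas by Proposition \ref{line} there are $q^2-q>q-1$ good line--orbits; hence I can pick a good line--orbit $\cL'$ no line of which lies in $\gamma$, and since $\cL'$ is $G$--invariant, no line of $\cL'$ then lies in any plane of $\cP'$.

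Next I would set
$$\cL:=\cL'\cup\cR_2,\qquad \cP:=\cP'\cup\{\langle N,s\rangle\mid s\in\cR_1\}.$$
The $q+1$ generators of $\cR_2$ are pairwise skew and lie on $\cH$, while every good line meets $\Sigma$ in a point off $\cH$ (Proposition \ref{line}), so the lines of $\cL'$ avoid $\cH$ and hence $\cR_2$; thus $\cL$ consists of $q^3+1$ pairwise skew lines. The $q+1$ planes $\langle N,s\rangle$ all pass through $N$, and two of them meet $\Sigma$ in distinct, hence skew, generators $s\neq s'$ of $\cR_1$, so their intersection contains no line through $N$ and equals $\{N\}$. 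The non--containment of lines in planes splits into four cases, three immediate: a line of $\cL'$ cannot lie in $\langle N,s\rangle$ since $\langle N,s\rangle\cap\Sigma=s\subset\cH$ while the $\Sigma$--point of a good line is off $\cH$; a generator of $\cR_2$ cannot lie in a good plane $\gamma'$ since $\gamma'\cap\Sigma$ is a line of $\cT$ tangent to $\cH$, not a generator; and a generator $s'\in\cR_2$ meets $\langle N,s\rangle$ only in $s'\cap s$, because $\langle N,s\rangle\cap\cH=s$ and $\cR_1,\cR_2$ are opposite. The fourth case, a line of $\cL'$ in a plane of $\cP'$, was excluded by the choice of $\cL'$.

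It therefore remains only to check that each extra plane $\langle N,s\rangle$ meets each good plane in exactly a point, and this is the crux. Since no good plane contains $N$, the space $\langle N,\gamma\rangle$ is a solid, and $\langle N,s\rangle$ meets $\gamma$ in a point precisely when $s\not\subset\langle N,\gamma\rangle\cap\Sigma$. Writing $U=t\cap\cH$ and $W=NU'\cap\ell$, I would compute $\langle N,\gamma\rangle\cap\Sigma=\langle t,W\rangle=:\eta$, a plane of $\Sigma$ containing the tangent line $t$ and the point $W\in\ell$. A plane containing a tangent line of $\cH$ meets $\cH$ in an irreducible conic unless it is the tangent plane $T_U\cH$, in which case it contains the $\cR_1$--generator through $U$; so the desired property is equivalent to $\eta\neq T_U\cH$. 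As $T_U\cH$ meets $\ell$ only in the point $R=\ell\cap(\text{the }\cR_2\text{--generator through }U)$, one has $\eta=T_U\cH$ iff $W=R$ iff $U'\in RN$ --- and $U'\notin RN$ is exactly the hypothesis imposed on $U'$ in Remark \ref{description}. Hence $\eta$ is secant to $\cH$, contains no generator, and the claim holds for $\gamma$, and by $G$--invariance of $N$, $\cR_1$ and $\cP'$, for every good plane.

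I expect the bookkeeping of this tangent--plane analysis, together with the recognition that the side condition $U'\notin RN$ built into the good planes is precisely what forces $\eta\neq T_U\cH$, to be the main difficulty; everything else is a direct combination of Propositions \ref{line}, \ref{plane} and \ref{pencil} with elementary incidence arguments in $\PG(4,q)$. The resulting $\cL$ and $\cP$ then have the required sizes $q^3+1$ and the stated intersection properties.
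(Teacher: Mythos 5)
Your proposal is correct and follows essentially the same route as the paper: you build $\cP$ from a good plane--orbit together with the $q+1$ planes $\langle N,s\rangle$, $s\in\cR_1$, and $\cL$ from a good line--orbit (selected via Proposition \ref{pencil}) together with $\cR_2$, exactly as in the paper's proof. Your tangent--plane computation showing $\langle N,\gamma\rangle\cap\Sigma\neq T_U\cH$ is an expanded, equivalent form of the paper's terser step that $U'\notin RN$ forces $\langle\gamma,\tau\rangle$ to be the whole hyperplane $S$, both resting on the side condition built into Remark \ref{description}.
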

\begin{proof}
Let $\cP_1$ be a good plane--orbit. Let $\cP_2$ be the set of $q+1$ planes generated by a line of $\cR_1$ and the point $N$. Of course, two distinct planes in $\cP_2$ share exactly the point $N$. We claim that a plane of $\cP_1$, say $\gamma$, and a plane of $\cP_2$, say $\tau$, meet in a point.

 Indeed, if $t = \gamma \cap \Sigma$ is disjoint from $\ell' = \tau \cap \Sigma$ or $\ell' = \ell$, then the assertion is trivial. If $t \cap \ell'$ is the point $U$, with $\ell' \neq \ell$, then let $R$ be the point in common between the unique line in $\cR_2$ containing $U$ and $\ell$. Let $U' = \gamma \cap \pi$. By Remark \ref{description}, we have that $U' \notin R N$ and, hence, $\langle \gamma, \tau \rangle$ is the whole hyperplane $S$. Hence, $\gamma \cap \tau$ is a point and $\cP = \cP_1 \cup \cP_2$ is a set of $q^3+1$ planes mutually intersecting in exactly a point. From Proposition \ref{pencil}, among the $q^2-q$ good line--orbits, there are exactly $q-1$ of them covered by members of $\cP_1$. Hence, there exists a good line--orbit, say $\cL_1$, such that no line of $\cL_1$ is contained in a plane of $\cP_1$. Notice that the $(q+1)(q^3-q)$ points covered by the lines of $\cL_1$ are those in $S \setminus (\pi \cup \cH)$. Therefore, a line in $\cL_1$ cannot be contained in a plane of $\cP_2$. Moreover, if $\cL= \cL_1 \cup \cR_2$, we have that $\cL$ is a set of size $q^3+1$ consisting of mutually disjoint lines. In particular, no line of $\cL$ is contained in a plane of $\cP$, as required.    
\end{proof}

\begin{remark}
With the same notation as in Theorem \ref{code1}, the code $\cC = \cP_1 \cup \cP_2 \cup \cL_1 \cup \cR_2$ is an optimal code of type $IV)$. In the same fashion, it can be seen that, if $\pi'$ is a plane containing $\ell$ and not contained in $\Sigma$, then the code $\cC = \cP_1 \cup \left(\cP_2 \setminus \{ \pi \}\right) \cup \{ \pi' \} \cup \cL_1 \cup \cR_2$ is an optimal code of type $IV)$. Let $\pi' \ne \pi$ and let $S_1$ be the solid generated by $\pi$ and $\pi'$. If $S_2$ is a solid containing $\pi$ and distinct from $S_1$ and $r_2$ is the line of $\cR_2$ contained in $S_2$, we have that the code $\cC = \cP_1 \cup \left(\cP_2 \setminus \{ \pi \}\right) \cup \{ \pi' \} \cup \cL_1 \cup \left(\cR_2 \setminus \{ r_2 \}\right) \cup \{ S_2 \}$ is an optimal code of type $II)$. On the other hand, the code $\cC = \cP_1 \cup \left(\cP_2 \setminus \{ \pi \}\right) \cup \cL_1 \cup \left(\cR_2 \setminus \{ r_2 \}\right) \cup \{ S_2, r_2 \cap \ell \}$ is an optimal code of type $III)$. Let $\cP_3$ be the set of $q+1$ planes generated by a line of $\cR_2$ and the point $N$. Then, arguing as in the proof of Theorem \ref{code1}, it can be proved that $\cC = \cP_1 \cup \cP_3 \cup \cL_1 \cup \cR_1$ is an optimal code of type $IV)$. Let $r$ be a line of $\pi$ distinct from $\ell$ and not containing $N$ and let $V$ be the point $\ell \cap r$. Let $\tau$ be a plane of $\cP_3$ not containing $V$ and let $V'$ be the point $\tau \cap \ell$. Then the code $\cC = \cP_1 \cup \left(\cP_3 \setminus \{ \tau \}\right) \cup \cL_1 \cup \left(\cR_1 \setminus \{ \ell \}\right) \cup \{ r, V' \}$ is an optimal code of type $I)$.   
\end{remark}

\noindent{\bf Acknowledgements.}
The authors wish to thank the Ghent University student Jozefien D'haeseleer for giving the permission to use the drawings  that she made when studying this article within the framework of her master project \cite{JD:17}.

\end{document}